\newtheorem{theorem}{Theorem}
\newtheorem{example}[theorem]{Example}
\newtheorem{corollary}[theorem]{Corollary}
\newtheorem{lemma}[theorem]{Lemma}
\theoremstyle{definition}
\newtheorem{definition}[theorem]{Definition}
\newtheorem{remark}[theorem]{Remark}
\title{The Marcinkiewicz-Zygmund Property for Riemann Differences with Geometric Nodes}
\begin{document}
\maketitle
\begin{abstract}
We study when a Riemann difference of order \( n \) possesses the 

\noindent Marcinkiewicz-Zygmund (MZ) property: that is, whether the conditions \( f(h) = o(h^{n-1}) \) and \( Df(h) = o(h^n) \) imply \( f(h) = o(h^n) \). This implication is known to hold for some classical examples with geometric nodes, such as \( \{0, 1, q, \dots, q^{n-1}\} \) and \( \{1, q, \dots, q^n\} \), leading to a conjecture that these are the only such Riemann differences with the MZ property (see \cite{ACF1}). However, this conjecture was disproved by the third-order example with nodes \( \{-1, 0, 1, 2\} \) (\cite{CF2}), and we provide further counterexamples and a general classification here.

We establish a complete analytic criterion for the MZ property by developing a recurrence framework: we analyze when a function \( R(h) \) satisfying \( D(h) = R(qh) - A R(h) \), together with \( D(h) = o(h^n) \) and \( R(h) = o(h^{n-1}) \), forces \( R(h) = o(h^n) \). We prove that this holds if and only if \( A \) lies outside a critical modulus annulus determined by \( q \) and \( n \), covering both \( |q| > 1 \) and \( |q| < 1 \) cases. This leads to a complete characterization of all Riemann differences with geometric nodes that possess the MZ property, and provides a flexible analytic framework applicable to broader classes of generalized differences.
\end{abstract}

\maketitle

\section{Introduction}

The concept of differentiation lies at the heart of analysis. To address limitations in smoothness and extend differentiation to broader function classes, various generalizations of the classical derivative have been developed. Among these, Riemann derivatives offer a discrete framework for computing higher-order derivatives directly from function values. These derivatives depend on a choice of interpolation nodes and associated coefficients, and their properties reveal subtle relationships between analytic and measure-theoretic structures.

Let $n \geq 1$ be fixed. For distinct real numbers $b_1, b_2, \dots, b_{n+1}$ (called \emph{nodes}), the associated divided difference operator is the linear operator $\Delta_n$ defined on the space of real-valued functions $f$ by
\[
\Delta_n f = \sum_{k=1}^{n+1} \frac{f(b_k)}{\prod_{j \neq k} (b_k - b_j)} = \sum_{k=1}^{n+1} a_k f(b_k),
\]
where the weights $a_k = \frac{1}{\prod_{j \neq k} (b_k - b_j)}$ depend only on the nodes. These weights coincide with the Lagrange interpolation coefficients corresponding to the nodes $b_k$.

The corresponding difference operator is defined by
\[
R_h f(c) := \sum_{k=1}^{n+1} a_k f(c + b_k h),
\]
so that the $n$-th \emph{Riemann derivative} of $f$ at a point $c$ is given by
\[
Rf^{(n)}(c) = \lim_{h \to 0} \frac{n! R_h f(c)}{h^n},
\]
provided the limit exists. Notably, different choices of nodes give rise to different Riemann derivatives. For instance, the \emph{forward Riemann derivative} corresponds to nodes $0,1,2,\dots,n$ and is given by
\[
Rf^{(n)}(c) = \lim_{h \to 0} \frac{\sum_{k=0}^{n} (-1)^{n-k} \binom{n}{k} f(c + kh)}{h^n},
\]
while a symmetric choice of nodes produces the \emph{symmetric Riemann derivative}.

The weights $a_k$ associated to a Riemann derivative satisfy the system
\begin{equation} \label{q1}
\sum_{k=1}^{n+1} a_k b_k^j = 
\begin{cases}
0 & \text{for } j = 0, 1, \dots, n-1, \\
1 & \text{for } j = n.
\end{cases}
\end{equation}
Thus, whenever the classical $n$-th derivative $f^{(n)}(c)$ exists, so does $Rf^{(n)}(c)$, and the two coincide. This positions Riemann derivatives as natural tools for discrete differentiation. However, the converse implication
\[
Rf^{(n)}(c) \Rightarrow f^{(n)}(c)
\]
does not hold in general. A simple counterexample is the Schwartz derivative:
\[
f_s(c) = \lim_{h \to 0} \frac{f(c+h) - f(c-h)}{2h},
\]
which exists at $c = 0$ for $f(x) = |x|$ but $f$ fails to be differentiable at $0$.

Peano observed that the existence of a polynomial $p(x)$ satisfying
\[
f(c+h) - p(c+h) = o(h^n)
\]
does not imply the existence of $f^{(n)}(c)$. Functions satisfying this approximation property are said to be \emph{$n$-times Peano differentiable} at $c$, and the coefficients of $\frac{(x-c)^k}{k!}$ in $p(x)$ are denoted $f_k(c)$. We always assume $f_0(c) = f(c)$, so that Peano differentiability of order zero is continuity and of order one is ordinary differentiability. The relation
\[
f_n(c) \Rightarrow Rf^{(n)}(c)
\]
follows from Taylor’s theorem and the structure of \eqref{q1}.

For $n \geq 2$, the converse implication
\[
Rf^{(n)}(c) \Rightarrow f^{(n)}(c)
\]
can fail everywhere for non-measurable functions, and even on large sets for measurable functions. We present an example illustrating the first case and show that the second arises from the failure of the converse to a generalized form of Taylor’s theorem.

For each nowhere dense closed set $H \subset \mathbb{R}$ and $n \geq 2$, there exists a function $f$ such that $f_n$ exists everywhere but $f^{(n)}(c)$ fails precisely on $H$. Thus, even for measurable functions, the implication $Rf^{(n)}(c) \Rightarrow f^{(n)}(c)$ fails dramatically. This naturally raises the question: does $Rf^{(n)}(c)$ imply $f_n(c)$?

A theorem of Marcinkiewicz and Zygmund affirms this implication under the assumption of measurability: if $f$ is measurable on $\mathbb{R}$ and $Rf^{(n)}(c)$ exists on a measurable set $E$, then $f_n(c)$ exists almost everywhere on $E$. Their argument is based on a more local result: if $f_0(c),\dots,f_{n-1}(c)$ exist, and the Riemann derivative corresponding to the nodes $\{0,1,2,4,\dots,2^{n-1}\}$ exists at $c$, then $f_n(c)$ exists.

This leads to the main focus of the present paper: we investigate which Riemann differences possess the property that
\[
f_k(c) \text{ exists for } 0 \leq k \leq n-1 \text{ and } Rf^{(n)}(c) \text{ exists } \Rightarrow f_n(c) \text{ exists}.
\]
We call such differences those with the \emph{Marcinkiewicz-Zygmund (MZ) property}. If this implication fails for a given $R$, then we can construct a function $g$ such that $g_k(0) = 0$ for $k < n$, $R^{(n)}g(0) = 0$, but $g_n(0)$ does not exist. Thus, the MZ property is equivalent to the implication
\[
g(h) = o(h^{n-1}) \text{ and } Rg(h) = o(h^n) \Rightarrow g(h) = o(h^n),
\]
where $Rg(h) := \sum_k a_k g(b_k h)$ is a generalized difference operator.

Previous results have shown that Riemann differences with geometric nodes of the form $\{0,1,q,\dots,q^{n-1}\}$ or $\{1,q,\dots,q^n\}$ for $q \notin \{-1,0,1\}$ satisfy the MZ property. This led to the conjecture that only such geometric-node differences could have the MZ property. However, this conjecture was disproven by the third-order example with nodes $\{-1,0,1,2\}$, which also satisfies the property (see \cite{CF2}).

In this paper, we establish a complete classification of all Riemann differences with geometric nodes that possess the MZ property. We introduce a general analytic framework based on the recurrence
\[
D(h) = R(qh) - A R(h),
\]
and determine precisely when such a relation, combined with decay conditions $D(h) = o(h^n)$ and $R(h) = o(h^{n-1})$, forces $R(h) = o(h^n)$. Our main result shows that this happens if and only if $|A|$ lies outside a critical annulus determined by $q$ and $n$. The classification applies uniformly across the cases $|q| > 1$ and $0 < |q| < 1$, and leads to a necessary and sufficient condition for the MZ property in terms of the roots of the characteristic polynomial of the difference.

We also demonstrate how this framework resolves various previously open examples, including several fourth-order Riemann differences that do satisfy the MZ property despite not conforming to earlier geometric templates. The techniques extend to asymmetric and shifted configurations and show that the MZ property is governed by the root geometry rather than the symmetry or placement of nodes.

Finally, we present an application illustrating how continuity at the origin can enlarge the class of first-order Riemann differences equivalent to the ordinary derivative. In particular, we show that continuity can restore differentiability in cases where, without it, the classical implication fails (cf. \cite{ACC}).
\section{Necessary conditions}

In the introduction, we observed that the existence of a Riemann derivative does not imply the existence of the ordinary derivative, and that this failure can persist even when the function is measurable. We then asked a more refined question: under what conditions does the existence of a Riemann derivative imply the existence of the corresponding Peano derivative? This leads to the Marcinkiewicz-Zygmund property, which asserts that such an implication holds under appropriate regularity assumptions.

In this section, we present examples showing that the Peano implication can fail when those assumptions are violated. These examples demonstrate that continuity, measurability, and the existence of lower-order Peano derivatives are necessary for the MZ property to hold. They also motivate the formal framework we will use to classify which differences possess this property.

Marcinkiewicz and Zygmund proved that if $f$ is measurable on $\mathbb{R}$ and the Riemann derivative $Rf^{(n)}(x)$ exists on a measurable set $E$, then the $n$-th Peano derivative $f_n(x)$ exists almost everywhere on $E$. In particular, if $f$ is Schwartz differentiable on $\mathbb{R}$—i.e., if the symmetric difference quotient converges—then $f$ is also ordinarily differentiable almost everywhere. Remarkably, this conclusion holds even without assuming measurability of $f$. However, as the following example demonstrates, the measurability hypothesis becomes essential when $n \geq 2$.

Let $n \geq 2$ and fix a set of nodes $\{b_1, b_2, \ldots, b_{n+1}\}$. We will construct a non-linear solution of the Cauchy functional equation
\begin{equation} \label{ch1}
f(x+y) = f(x) + f(y)
\end{equation}
such that $Rf^{(n)}(x) = 0$ for all $x \in \mathbb{R}$. By a result of Darboux, such a function is nowhere continuous and, in particular, is not Peano differentiable at any point. That such a solution is also non-measurable was first established independently by Banach and Sierpiński in 1920.

\begin{example} \label{e1}
Let $F = \mathbb{Q}(b_1, b_2, \ldots, b_{n+1})$ be the smallest subfield of $\mathbb{R}$ containing $\mathbb{Q}$ and the nodes. Since $F$ is countable, we can choose real numbers $\alpha$ and $\beta$ such that $\alpha/\beta \notin F$. Then $\{\alpha, \beta\}$ is linearly independent over $F$, and we may extend it to a Hamel basis $H$ of $\mathbb{R}$ over $F$.

Every $x \in \mathbb{R}$ has a unique finite representation
\[
x = d(x)\alpha + \sum_i d_i h_i,
\]
where $d(x), d_i \in F$ and $h_i \in H$. Define $f(x) = d(x)$, the coefficient of $\alpha$ in the Hamel expansion of $x$. Then $f$ satisfies \eqref{ch1}, since for any $x,y \in \mathbb{R}$,
\[
f(x+y) = d(x+y) = d(x) + d(y) = f(x) + f(y).
\]
Because $f(\alpha) = 1$ and $f(\beta) = 0$, $f$ is not linear (indeed $f(0) = 0$), and hence $f$ is nowhere continuous and non-measurable.

For any $c \in \mathbb{R}$ and $h \in \mathbb{R}$, we have $d(c + b_k h) = d(c) + b_k d(h)$, so that
\[
\sum a_k f(c + b_k h) = d(c) \sum a_k + d(h) \sum a_k b_k = 0
\]
by the system \eqref{q1}. Hence $Rf^{(n)}(c) = 0$ for all $c \in \mathbb{R}$.
\end{example}

This function shows that the implication $Rf^{(n)}(c) \Rightarrow f_n(c)$ can fail everywhere when $f$ is not continuous at $c$. Thus, for the implication to hold, it is necessary that $f$ be continuous at $c$. More generally, as the next example illustrates, the existence of the first $n-1$ Peano derivatives at $c$ is also necessary.

\begin{example}
Let $n \geq 2$, let $\{b_1, \ldots, b_{n+1}\}$ be a set of nodes, and let $F$ be the field from Example~\ref{e1}. Define
\[
f(c + h) =
\begin{cases}
h^{n-1} & \text{if } h \in F, \\
0 & \text{otherwise}.
\end{cases}
\]
Then $f$ is measurable and satisfies $f(c+h) = o(h^{n-2})$ as $h \to 0$. By the conditions \eqref{q1}, we again have $Rf^{(n)}(c) = 0$. However, $f_{n-1}(c) = (n-1)!$ on $F$ and $0$ on its complement, so $f_n(c)$ fails to exist.
\end{example}

 Suppose there exists a function $f$ and a point $c$ such that $f_k(c)$ exists for $k = 0, \dots, n-1$, and $Rf^{(n)}(c)$ exists. Define a function
\[
g(h) = f(c + h) - \sum_{k=0}^{n-1} \frac{f_k(c)}{k!} h^k - \frac{Rf^{(n)}f(c)}{n!} h^n.
\]
Then $g_k(0) = 0$ for all $k < n$, and $R^{(n)}g(0) = 0$ by the system \eqref{q1}. This is equivalent to the asymptotic conditions \[ g(h)=o(h^{n-1}) \text{ and } Rg(h)=o(h^n)\] where $Rg(h)=\sum_k a_k g(b_k h)$. Moreover, $f_n(c)$ exists if and only if $g_n(0)$ exists  in which case $g(h)=o(h^{n})$. This observation motivates the following general definition of the Marcinkiewicz-Zygmund property.

\begin{definition} Let $\{a_k\}_{k=1}^m$ be a finite  sequence of complex numbers,  and let $\{b_k\}_{k=1}^m
$ be a finite sequence of distinct real numbers. The associated linear difference operator $R$ is defined by
\[
Rg(h) = \sum_k a_k g(b_k h)
\] where the $a_k$ are called coefficients, and the $b_k$ are the nodes of the operator.

\noindent Let $n\geq 1$. We say that $R$ has the Marcinkiewicz–Zygmund (MZ) property of order $n$ if, for all real-valued functions $g$ the conditions \[g(h) = o(h^{n-1}) \text{ and } Rg(h) = o(h^n)\] imply \[g(h) = o(h^n).\] Here,  \( f(h) = o(h^n) \) means $\lim_{h \to 0} \frac{|f(h)|}{|h|^n} = 0.$

\end{definition}

\noindent\textbf{Notation and Clarification.} 
In what follows, we fix a positive integer $n$, so references to the MZ property will omit the explicit mention of order $n$, which is understood from context.

 We distinguish between two related uses of the symbol \( R \). When we write \( Rf(h) \), we mean the application of a linear difference operator \( R \), defined by
\[
Rf(h) := \sum_k a_k f(b_k h),
\]
where \( \{a_k, b_k\} \) are fixed coefficients and nodes. If these coefficients satisfy the moment system~\eqref{q1}, then \( R \) is called a \emph{Riemann difference} of order \( n \).

More generally, we say that a difference operator \( R \) is of \emph{order \( n \)} if it annihilates all polynomials of degree less than \( n \), but acts nontrivially on \( x^n \):
\[
Rf(h) = 0 \quad \text{for all polynomials } f(x) = x^m \text{ with } 0 \leq m \leq n-1,\]\[ \quad \text{and} \quad R(x^n)(h) = c h^n \text{ with } c \neq 0.
\]

In contrast, when we write \( R(h) \), we refer to an abstract complex-valued function of \( h \), which may or may not arise from applying a difference operator to a function. Theorems involving recursions like \( D(h) = R(qh) - A R(h) \) operate at this more general level, where both \( R(h) \) and \( D(h) \) are understood as functions satisfying asymptotic and recurrence properties. This distinction allows us to formulate results in maximum generality while still applying them to concrete difference operators.

\medskip
\noindent\textbf{Normalization.}
When classifying differences up to the MZ property, it is convenient to reduce to a standard form. The following lemma justifies this reduction.

\begin{lemma}
Let \( R \) be a difference with coefficients \( a_k \) and nodes \( b_k \), and let \( c \neq 0 \), \( r \neq 0 \). Define the rescaled difference \( R' \) by
\[
R'f(h) := \sum_k r a_k f\left( cb_k h \right)=rRf(ch).
\]
Then \( R \) has the MZ property if and only if \( R' \) does.
\end{lemma}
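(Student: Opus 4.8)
The plan is to show that the two defining conditions of the MZ property---namely the hypotheses $g(h) = o(h^{n-1})$ and the difference condition $Rg(h) = o(h^n)$, together with the conclusion $g(h) = o(h^n)$---all transform covariantly under the rescaling of nodes by a factor $1/c$ and the rescaling of coefficients by a factor $r$. Since the MZ property is a statement quantified over \emph{all} functions $g$, I would set up a bijection between functions witnessing (or failing) the property for $R$ and those for $R'$, given by a substitution in the argument of $g$.

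First I would make the substitution explicit. Given any function $g$, define $\tilde g(h) := g(h/c)$. The key algebraic observation is that
\[
R'\tilde g(h) = \sum_k r a_k \, \tilde g\!\left(\frac{b_k}{c}h\right) = r \sum_k a_k \, g\!\left(\frac{b_k}{c}\cdot\frac{h}{c}\cdots\right),
\]
so I must be careful to match the scaling: writing $u = h/c$, one checks that $R'\tilde g(h) = r \sum_k a_k g(b_k u) = r\, Rg(h/c)$. Thus applying $R'$ to the rescaled function $\tilde g$ reproduces, up to the nonzero constant $r$ and the change of variable $h \mapsto h/c$, the original difference $Rg$. This is the heart of the argument, and it reduces everything to tracking how the little-$o$ conditions behave under $h \mapsto h/c$ and under multiplication by a nonzero constant.

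Next I would verify that each asymptotic condition is invariant under these operations. Since $c \neq 0$, the map $h \mapsto h/c$ sends a neighborhood of $0$ to a neighborhood of $0$, and $(h/c)^m = c^{-m} h^m$, so $\phi(h) = o(h^m)$ holds if and only if $\phi(h/c) = o(h^m)$; similarly multiplication by the nonzero constant $r$ does not affect a little-$o$ estimate. Applying these facts: $\tilde g(h) = g(h/c) = o(h^{n-1})$ iff $g(h) = o(h^{n-1})$; $R'\tilde g(h) = r\,Rg(h/c) = o(h^n)$ iff $Rg(h) = o(h^n)$; and $\tilde g(h) = o(h^n)$ iff $g(h) = o(h^n)$. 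Consequently $\tilde g$ witnesses the MZ implication for $R'$ exactly when $g$ witnesses it for $R$, and since $g \mapsto \tilde g$ is a bijection on the space of all functions (with inverse $\tilde g \mapsto \tilde g(c\,\cdot)$), the universally quantified implication holds for $R'$ iff it holds for $R$.

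I do not anticipate a genuine obstacle here, as the lemma is essentially a bookkeeping statement about how scaling interacts with little-$o$ asymptotics. The one point requiring care---and the only place an error could creep in---is the exact matching of the argument scaling in the identity $R'\tilde g(h) = r\, Rg(h/c)$: one must confirm that the node rescaling by $1/c$ in the definition of $R'$ is precisely compensated by the substitution $\tilde g(h) = g(h/c)$, so that the composite leaves the nodes $b_k$ acting on a single rescaled variable rather than a doubly-scaled one. Once that identity is pinned down, the invariance of the three $o(\cdot)$ conditions follows immediately, and the biconditional is a direct consequence of the bijectivity of the substitution.
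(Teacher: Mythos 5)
Your overall route is the same as the paper's --- the paper disposes of this lemma in two sentences, observing that the change of variables \( h \mapsto ch \) preserves each condition \( f(h) = o(h^m) \) and that the factor \( r \neq 0 \) is harmless --- but your central identity is miscomputed, at precisely the spot you yourself flagged as the only delicate point. With \( \tilde g(h) := g(h/c) \) one has
\[
R'\tilde g(h) = \sum_k r a_k\, \tilde g\!\left(\frac{b_k}{c}h\right) = r \sum_k a_k\, g\!\left(\frac{b_k h}{c^2}\right) = r\, Rg\!\left(\frac{h}{c^2}\right),
\]
not \( r\,Rg(h/c) \): your substitution does not compensate the node rescaling, it compounds it. The clean choices are \( \tilde g(h) := g(ch) \), which gives \( R'\tilde g(h) = r\,Rg(h) \) exactly, or --- simpler still --- no substitution at all, since directly from the definitions \( R'g(h) = \sum_k r a_k\, g(b_k h/c) = r\, Rg(h/c) \); thus the \emph{same} function \( g \) witnesses or refutes the MZ property for both operators, and no bijection on function space is needed.

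That said, the slip is repairable entirely within your own framework: the invariance facts you establish (a little-\( o \) estimate is stable under \( h \mapsto h/\lambda \) for any fixed \( \lambda \neq 0 \), and under multiplication by a nonzero constant) apply just as well with \( c^2 \) in place of \( c \), so the corrected identity \( R'\tilde g(h) = r\,Rg(h/c^2) \) still yields all three transfers --- \( \tilde g = o(h^{n-1}) \) iff \( g = o(h^{n-1}) \), \( R'\tilde g = o(h^n) \) iff \( Rg = o(h^n) \), and \( \tilde g = o(h^n) \) iff \( g = o(h^n) \) --- and bijectivity of \( g \mapsto \tilde g \) then closes the biconditional as you intended. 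So the defect is a false displayed equation at what you call the heart of the argument, rather than a missing idea; once the identity is corrected (or the unnecessary substitution is dropped altogether), your proof coincides with the paper's.
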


\begin{proof}
The change of variables \( h \mapsto ch \) implies \( f(h) = o(h^m) \) if and only if \( f(ch) = o(h^m) \). Rescaling the coefficients by \( r \) does not affect asymptotic decay, so the MZ property is preserved.
\end{proof}

As a consequence, we may assume without loss of generality that one of the nodes is equal to \( 1 \), and that one of the coefficients is equal to \( 1 \).

\medskip

\section{Recurrence criteria for asymptotic decay}

In this section, we develop a general analytic framework to determine when a function \( R(h) \colon \mathbb{R} \to \mathbb{C} \), satisfying a recurrence of the form
\[
D(h) = R(qh) - A R(h),
\]
exhibits higher-order vanishing at zero. This setting arises naturally in the study of the Marcinkiewicz-Zygmund property for Riemann differences, where \( R(h) \) corresponds to a difference operator applied to a function and \( D(h) \) represents its increment.

Our goal is to determine when the conditions
\[
|D(h)| = o(h^n) \quad \text{and} \quad |R(h)| = o(h^{n-1})
\]
force the improved decay \( |R(h)| = o(h^n) \). We show that this occurs if and only if the recurrence coefficient \( A \) lies outside a critical annulus bounded by \( |q|^{n-1} \) and \( |q|^n \). The results apply to all real numbers \( q \neq 0, \pm 1 \).

\medskip
\noindent\textbf{Overview }
\begin{itemize}
    \item In the low-growth and high-growth regimes, we prove that \( R(h) = o(h^n) \) necessarily holds.
    \item In the intermediate range \( |q|^{n-1} < |A| \leq |q|^n \), ($|q|^n\leq |A|<|q|^n$ if $|q|<1$) we construct explicit counterexamples where \( R(h) = o(h^{n-1}) \) and \( D(h) = o(h^n) \) but \( R(h) \not= o(h^n) \).
    \item The low-growth and failure cases are relatively straightforward, but the high-growth case requires delicate control of recurrence growth and asymptotic tracking.
\end{itemize}

We will provide the proofs only for the cases \( |q| > 1 \), since the case \( |q| < 1 \) follows from this one by the substitution
\[
R'(h) := R(qh), \quad D'(h) := \frac{-1}{A} D(h),
\]
under which the recurrence \( D(h) = R(qh) - A R(h) \) is transformed into the equivalent relation
\[
D'(h) = R'\left(\frac{1}{q} h\right) - \frac{1}{A} R'(h).
\]

\subsection{Decay when \( A \) lies outside the critical annulus}

\begin{theorem}[Low-growth case]\label{th1}
Let \( |q| > 1 \), and let \( D(h) \), \( R(h) \) be complex-valued functions such that
\[
|D(h)| = o(h^n), \quad |R(h)| = o(h^{n-1}),
\]
and
\[
D(h) = R(qh) - A R(h),
\]
for some \( A \in \mathbb{C} \) with \( |A| \leq |q|^{n-1} \). Then \( R(h) = o(h^n) \).
\end{theorem}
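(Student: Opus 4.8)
The plan is to invert the recurrence and iterate it inward toward $h=0$. Since $|q|>1$, rewriting the relation as $R(h) = A R(h/q) + D(h/q)$ replaces $h$ by the strictly smaller argument $h/q$, so repeated substitution drives the argument to $0$. Telescoping $m$ times yields the closed form
\[
R(h) = A^m R(h/q^m) + \sum_{j=1}^{m} A^{j-1} D(h/q^j),
\]
which I would confirm by a one-line induction on $m$. The strategy is then to let $m \to \infty$: show the remainder $A^m R(h/q^m)$ vanishes, identify $R(h)$ with the resulting infinite series, and finally estimate that series after dividing by $h^n$.

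For the remainder I would set $\psi(t) := |R(t)|/|t|^{n-1}$, so that the hypothesis $R=o(h^{n-1})$ reads $\psi(t)\to 0$ as $t\to 0$. A direct substitution gives
\[
\bigl|A^m R(h/q^m)\bigr| = \psi(h/q^m)\,|h|^{n-1}\left(\frac{|A|}{|q|^{n-1}}\right)^{m}.
\]
This is exactly where the hypothesis $|A|\le|q|^{n-1}$ is used: the bracketed factor is at most $1$, so the whole expression is bounded by $\psi(h/q^m)\,|h|^{n-1}$, which tends to $0$ as $m\to\infty$ for each fixed $h\neq 0$. Hence the remainder drops out and, provided the series converges, $R(h)=\sum_{j\ge 1}A^{j-1}D(h/q^j)$.

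For the series I would write $\phi(t):=|D(t)|/|t|^{n}$, so $D=o(h^n)$ means $\phi(t)\to 0$ as $t\to 0$, and compute
\[
\frac{|R(h)|}{|h|^n} \le |q|^{-n}\sum_{j=1}^{\infty}\phi(h/q^j)\,\rho^{\,j-1}, \qquad \rho:=\frac{|A|}{|q|^n}.
\]
Because $|q|>1$ forces $|A|\le|q|^{n-1}<|q|^n$, we get $\rho<1$ automatically, so the series converges geometrically. To show the right-hand side tends to $0$ as $h\to 0$, I would use that $\phi$ is bounded by some $M$ on a neighborhood of $0$ (and that $|h/q^j|\le|h|$ keeps every argument inside it) and split the sum at an index $N$: the tail $j>N$ is dominated by $M\sum_{j>N}\rho^{j-1}$, made small by taking $N$ large, while the head is a fixed finite sum whose terms $\phi(h/q^j)$ each vanish as $h\to 0$.

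I expect the one genuinely delicate point to be this final head/tail argument, since the normalized series must be shown to vanish as a whole rather than merely termwise. The boundary case $|A|=|q|^{n-1}$ requires no extra work, as the remainder factor $\left(|A|/|q|^{n-1}\right)^m$ is still at most $1$; and the case $A=0$ is immediate, since the recurrence then reads $R(h)=D(h/q)=o(h^n)$.
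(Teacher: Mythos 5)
Your proof is correct and follows essentially the same route as the paper: the identical telescoping identity \( R(h) = A^m R(h/q^m) + \sum_{j=1}^{m} A^{j-1} D(h/q^j) \), with \( |A| \le |q|^{n-1} \) used exactly as in the paper to annihilate the remainder term and \( |A| < |q|^n \) giving the convergent geometric series. The only cosmetic difference is at the end, where you pass to the infinite series and conclude via a head/tail split, while the paper fixes \( \epsilon \) up front, bounds \( |D(h)| < \epsilon |h|^n \) on a \( \delta \)-neighborhood, and obtains \( |R(h)| \le C\epsilon |h|^n \) directly from the finite sum --- an equivalent bookkeeping of the same estimate.
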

\begin{proof}
Fix \( \epsilon > 0 \). Since \( D(h) = o(h^n) \), there exists \( \delta > 0 \) such that \( |D(h)| < \epsilon |h|^n \) whenever \( 0 < |h| < \delta \). Fix such an \( h \in (0, \delta) \), and define the sequence \( h_k := h / q^k \) for \( k \in \mathbb{N} \).

By the recurrence relation,
\[
R(h_k) = \frac{R(h_{k-1}) - D(h_k)}{A},
\]
we can expand \( R(h) \) as a telescoping sum:
\[
R(h) = A^k R(h_k) + \sum_{j = 0}^{k - 1} A^j D(h_{j+1}).
\]
Taking absolute values and using \( |D(h)| < \epsilon |h|^n \), we obtain
\[
|R(h)| \leq |A|^k |R(h_k)| + \epsilon \sum_{j = 0}^{k - 1} |A|^j |h_{j+1}|^n.
\]

Note that \( |h_k| = |h| / |q|^k \), so \( |h_{j+1}|^n = |h|^n / |q|^{n(j+1)} \), and
\[
\sum_{j = 0}^{k - 1} |A|^j |h_{j+1}|^n =\frac{|h|^n}{|q|^n} \sum_{j = 0}^{k - 1} \left( \frac{|A|}{|q|^n} \right)^j.
\]
Since \( |A| \leq |q|^{n-1} < |q|^n \), the geometric series converges, and we get
\[
|R(h)| \leq |A|^k |R(h_k)| + C \epsilon |h|^n
\]
for some constant \( C \) independent of \( h \).

We now estimate the term \( |A|^k |R(h_k)| \). Write
\[
|A|^k |R(h_k)| = |h_k|^{n-1} \left| \frac{R(h_k)}{h_k^{n-1}} \right| |A|^k = |h|^{n-1} \left( \frac{|A|}{|q|^{n-1}} \right)^k \left| \frac{R(h_k)}{h_k^{n-1}} \right|.
\]
Since \( |A| / |q|^{n-1} \leq 1 \), and \( \frac{R(h_k)}{h_k^{n-1}} \to 0 \), we conclude that
\[
|A|^k |R(h_k)| \to 0 \quad \text{as } k \to \infty.
\]
Therefore, from the bound
\[
|R(h)| \leq |A|^k |R(h_k)| + C \epsilon |h|^n,
\]
we conclude \( R(h) = o(h^n) \) as desired.

\end{proof}

\noindent The proof of the low-growth case is a standard telescoping argument. In contrast, the high-growth case requires more intricate asymptotic control:

\begin{theorem}[High-growth case]\label{th2}
Let \( |q| > 1 \), and let \( D(h) \), \( R(h) \) be complex-valued functions such that
\[
|D(h)| = o(h^n), \quad |R(h)| = o(h^{n-1}),
\]
and
\[
D(h) = R(qh) - A R(h),
\]
for some \( A \in \mathbb{C} \) with \( |A| > |q|^n \). Then \( R(h) = o(h^n) \).
\end{theorem}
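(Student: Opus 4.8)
The plan is to iterate the recurrence \emph{upward} rather than downward. Solving $D(h) = R(qh) - AR(h)$ for $R(h)$ gives $R(h) = \tfrac{1}{A}R(qh) - \tfrac{1}{A}D(h)$, and iterating $k$ times produces the exact identity
\[
R(h) = \frac{R(q^k h)}{A^k} - \sum_{j=0}^{k-1} \frac{D(q^j h)}{A^{j+1}}.
\]
The point of going upward is that the repeated division by $A$ now works in our favor: since $|A| > |q|^n$, the factors $1/A^{j+1}$ dominate the growth of $|D(q^j h)|$, which has size of order $|q^j h|^n$ in the asymptotic region.

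The crucial device — and the step I expect to be the main obstacle — is that I cannot simply let $k \to \infty$, since the iterates $q^j h$ would leave the neighborhood of $0$ where the hypotheses $|D(h)| = o(h^n)$ and $|R(h)| = o(h^{n-1})$ give any control, and nothing is assumed about $R$ or $D$ away from the origin. Instead, having fixed $\epsilon > 0$ and chosen $\delta_0 > 0$ so small that $|D(t)| < \epsilon |t|^n$ and $|R(t)| < \epsilon |t|^{n-1}$ for all $0 < |t| < \delta_0$, I would, for each small $h$, take $k = k(h)$ to be the \emph{largest} integer with $|q^k h| < \delta_0$. Because $|q| > 1$, all intermediate iterates satisfy $|q^j h| \le |q^k h| < \delta_0$ for $0 \le j \le k$, so every term in the identity remains inside the region where the estimates hold; at the same time the maximality of $k$ forces $|q^k h| \ge \delta_0/|q|$, which pins down the size of the boundary term.

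With this choice the two pieces are estimated separately. For the sum I would use $|D(q^j h)| < \epsilon |q^j h|^n = \epsilon |q|^{jn}|h|^n$ to obtain a geometric series with ratio $|q|^n/|A| < 1$, giving a bound of the form $C\epsilon |h|^n$ with $C = 1/(|A| - |q|^n)$. For the boundary term, writing $s := \log|A|/\log|q|$ (so that $s > n$, since $|A| > |q|^n$) and using $|A|^k = (|q|^k)^s = (|q^k h|/|h|)^s$, I would estimate
\[
\frac{|R(q^k h)|}{|A|^k} < \frac{\epsilon\,|q^k h|^{n-1}}{|A|^k} = \epsilon\,|h|^s\,|q^k h|^{\,n-1-s} \le \epsilon \left(\frac{\delta_0}{|q|}\right)^{n-1-s} |h|^s,
\]
where the last step uses $n - 1 - s < 0$ together with $|q^k h| \ge \delta_0/|q|$. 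Since $s > n$, this contribution is $o(h^n)$.

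Combining the two bounds yields $\limsup_{h\to 0} |R(h)|/|h|^n \le C\epsilon$, and since $C$ depends only on $|A|$ and $|q|$ while $\epsilon$ is arbitrary, letting $\epsilon \to 0$ gives $R(h) = o(h^n)$. The same estimates apply verbatim for $h < 0$ and for negative $q$, since only the moduli $|q^j h| = |q|^j|h|$ enter the argument, so the case $|q| > 1$ is covered uniformly and the stated $|q| < 1$ reduction then completes the classification.
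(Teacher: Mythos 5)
Your proof is correct, and while it shares the paper's core mechanism, its logical structure and stopping rule are genuinely different. Both arguments rest on the same upward-telescoped identity \( R(h) = A^{-k}R(q^k h) - \sum_{j=0}^{k-1} A^{-(j+1)} D(q^j h) \) and on the geometric series with ratio \( |q|^n/|A| < 1 \), but the paper argues by contradiction: it assumes \( |R(h_k)|/|h_k|^n > d \) along a sequence \( h_k \to 0 \), chooses the iteration count \( m_k \) by the scale condition \( (|q|^{n-1}/|A|)^{m_k+1} < |h_k| \le (|q|^{n-1}/|A|)^{m_k} \), and shows that \( |R(q^{m_k}h_k)|/|q^{m_k}h_k|^{n-1} \) stays bounded below by a positive constant while \( q^{m_k}h_k \to 0 \), contradicting \( R(h) = o(h^{n-1}) \). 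Your stopping rule instead iterates until exiting a fixed neighborhood, so the endpoint \( q^k h \) is pinned in the fixed annulus \( \delta_0/|q| \le |q^k h| < \delta_0 \), and the substitution \( s = \log|A|/\log|q| > n \) cleanly converts \( |A|^{-k} = (|h|/|q^k h|)^s \) into a boundary contribution of size \( O(|h|^s) = o(|h|^n) \). This buys two things. First, your argument is direct and quantitative, yielding the explicit bound \( \limsup_{h \to 0} |R(h)|/|h|^n \le \epsilon/(|A| - |q|^n) \). Second, it is formally stronger than the paper's: since the boundary term is evaluated at a fixed scale rather than along points tending to \( 0 \), you only need \( R \) to be bounded on that fixed annulus (your use of \( \epsilon|t|^{n-1} \) there is valid but more than is required), whereas the paper's contradiction genuinely consumes the full decay \( R(h) = o(h^{n-1}) \) along \( q^{m_k}h_k \to 0 \). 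All steps check out, including the induction behind the identity, the requirement \( k \ge 1 \) (automatic once \( |h| < \delta_0/|q| \)), and the observation that only the moduli \( |q|^j|h| \) enter, which handles negative \( q \) and negative \( h \) uniformly.
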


\begin{proof}
Suppose not. Then there exists a constant \( d > 0 \) and a sequence \( h_k \to 0 \) such that
\[
\frac{|R(h_k)|}{|h_k|^n} > d.
\]

For each \( h_k \), define \( m_k \in \mathbb{N} \) to be the unique integer such that
\[
\frac{|q|^{(n-1)(m_k+1)}}{|A|^{m_k+1}} < |h_k| \leq \frac{|q|^{(n-1)m_k}}{|A|^{m_k}}.
\]
The existence of such \( m_k \) is guaranteed because the function \( m \mapsto \frac{|q|^{(n-1)m}}{|A|^m} \) is strictly decreasing and tends to 0 as \( m \to \infty \), given that \( |A| > |q|^{n-1} \). Importantly, it is the **first inequality** above that  implies \( m_k \to \infty \) as \( h_k \to 0 \).

Fix \( \epsilon > 0 \). Since \( D(h) = o(h^n) \), there exists \( \delta > 0 \) such that
\[
|D(h)| < \epsilon |h|^n \quad \text{whenever } 0 < |h| < \delta.
\]

Choose \( M \in \mathbb{N} \) so that
\[
\left( \frac{|q|^n}{|A|} \right)^M < \delta.
\]
Then for all sufficiently large \( k \), we have \( m_k \geq M \), and all points \( h_k^{(j)} := q^j h_k \) for \( j = 0, \dots, m_k \) satisfy
\[
|h_k^{(j)}| \leq |q|^{m_k} |h_k| \leq \left( \frac{|q|^n}{|A|} \right)^M < \delta.
\]
Thus, we may apply the recurrence at all those points.

Now iterate the recurrence:
\[
R(h_k^{(j+1)}) = A R(h_k^{(j)}) + D(h_k^{(j)}),
\]
which yields
\[
R(h_k^{(m_k)}) = A^{m_k} R(h_k) + \sum_{j=0}^{m_k - 1} A^{m_k - 1 - j} D(h_k^{(j)}).
\]

Taking absolute values:
\[
\left| R(h_k^{(m_k)}) - A^{m_k} R(h_k) \right| \leq \sum_{j=0}^{m_k - 1} |A|^{m_k - 1 - j} |D(h_k^{(j)})|.
\]

Since \( |D(h_k^{(j)})| < \epsilon |q|^{jn} |h_k|^n \), we get:
\[
\left| R(h_k^{(m_k)}) - A^{m_k} R(h_k) \right|
< \epsilon |h_k|^n \sum_{j=0}^{m_k - 1} |A|^{m_k - 1 - j} |q|^{jn}
\]

\[
= \epsilon |h_k|^n |A|^{m_k - 1} \sum_{j=0}^{m_k - 1} \left( \frac{|q|^n}{|A|} \right)^j
\leq C \epsilon |A|^{m_k - 1} |h_k|^n,
\]
where \( C = \sum_{j=0}^{\infty} \left( \frac{|q|^n}{|A|} \right)^j < \infty \), since \( |A| > |q|^n \).

Now divide both sides by \( |h_k^{(m_k)}|^{n-1} = |q|^{(n-1)m_k} |h_k|^{n-1} \):
\[
\left| \frac{R(h_k^{(m_k)})}{|h_k^{(m_k)}|^{n-1}} \right|
\geq \left| \frac{A^{m_k} R(h_k)}{|q|^{(n-1)m_k} |h_k|^{n-1}} \right|
- C \epsilon |h_k| \left( \frac{|A|}{|q|^{n-1}} \right)^{m_k - 1}.
\]

Now use the **lower bound** from the first inequality in the definition of \( m_k \):
\[
|h_k| > \frac{|q|^{(n-1)(m_k+1)}}{|A|^{m_k+1}}.
\]
Therefore:
\[
|h_k| \left( \frac{|A|}{|q|^{n-1}} \right)^{m_k - 1}
> \frac{|q|^{(n-1)(m_k+1)}}{|A|^{m_k+1}} \cdot \left( \frac{|A|}{|q|^{n-1}} \right)^{m_k - 1}
= \frac{|q|^{2(n-1)}}{|A|^2}.
\]

Also, since \( \frac{|R(h_k)|}{|h_k|^n} > d \), we have:
\[
\left| \frac{A^{m_k} R(h_k)}{|q|^{(n-1)m_k} |h_k|^{n-1}} \right|
> d |h_k| \left( \frac{|A|}{|q|^{n-1}} \right)^{m_k}.
\]

Putting everything together:
\[
\left| \frac{R(h_k^{(m_k)})}{|h_k^{(m_k)}|^{n-1}} \right|
\geq \left( d \cdot \frac{|A|}{|q|^{n-1}} - C \epsilon \right)
\cdot |h_k| \left( \frac{|A|}{|q|^{n-1}} \right)^{m_k - 1}.
\]

And by the above,
\[
|h_k| \left( \frac{|A|}{|q|^{n-1}} \right)^{m_k - 1}
> \frac{|q|^{2(n-1)}}{|A|^2}.
\]
So finally:
\[
\left| \frac{R(h_k^{(m_k)})}{|h_k^{(m_k)}|^{n-1}} \right|
> \left( d \cdot \frac{|A|}{|q|^{n-1}} - C \epsilon \right)
\cdot \frac{|q|^{2(n-1)}}{|A|^2}.
\]

Choose \( \epsilon \) small enough so that the factor in parentheses is positive. Then the right-hand side is a positive constant independent of \( k \), while the left-hand side must tend to 0 by the assumption \( R(h) = o(h^{n-1}) \). This contradiction completes the proof.
\end{proof}

\subsection{Failure when \( A \) lies inside the critical annulus}

\begin{theorem}\label{th3}
Assume \( |q| > 1 \), and let \( A \in \mathbb{C} \) satisfy \( |q|^n \geq |A| > |q|^{n-1} \). Then there exists a complex-valued function \( R\) such that
\[
|R(h)| = o(h^{n-1}), \quad |R(h)| \not= o(h^n),
\]
and
\[
R(qh) = A R( h).
\]
\end{theorem}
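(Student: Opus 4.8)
The plan is to produce an \emph{explicit} solution of the homogeneous functional equation $R(qh) = A R(h)$ (so that $D \equiv 0$, which trivially satisfies $D(h) = o(h^n)$) whose modulus decays at precisely the intermediate rate. The guiding heuristic is that the substitution $h = |q|^{-s}$ converts the multiplicative recurrence into the additive shift $\tilde R(s-1) = A\,\tilde R(s)$, whose solutions are $|A|^{-s}$ times an arbitrary $1$-periodic factor. Taking the constant periodic factor collapses this family to a pure complex power of $|h|$, which is all we need.

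Concretely, I would fix a branch of the logarithm, set $\beta := \dfrac{\ln A}{\ln |q|}$, and define $R(h) := |h|^{\beta}$ for $h \neq 0$. Since $\ln|q| > 0$ is real, we have $|q|^{\beta} = e^{\beta \ln|q|} = e^{\ln A} = A$, and therefore $R(qh) = |qh|^{\beta} = |q|^{\beta}|h|^{\beta} = A\,R(h)$ for every real $q$ with $|q| > 1$. Crucially, this computation uses only $|qh| = |q|\,|h|$, so the single formula handles both $q > 1$ and $q < -1$ at once, with no need to treat the two half-lines separately or to worry about sign coupling.

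The decay analysis is then immediate. Writing $\gamma := \operatorname{Re}\beta = \dfrac{\ln|A|}{\ln|q|} = \log_{|q|}|A|$, we have $|R(h)| = |h|^{\gamma}$. Applying the increasing function $\log_{|q|}$ to the annulus hypothesis $|q|^{n-1} < |A| \le |q|^{n}$ yields $n-1 < \gamma \le n$. From $\gamma > n-1$ we obtain $|R(h)|/|h|^{n-1} = |h|^{\gamma-(n-1)} \to 0$, so $R(h) = o(h^{n-1})$; from $\gamma \le n$ we get $|R(h)|/|h|^{n} = |h|^{\gamma - n} \ge 1$ for all $h \neq 0$, so $R(h) \neq o(h^n)$. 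This establishes all three required properties.

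There is essentially no hard analytic step here: the only genuine ``obstacle'' is recognizing the correct functional form, after which every verification is a one-line computation. I would also note that the choice of branch of $\ln A$ is irrelevant, since changing branches multiplies $R(h)$ by a factor of modulus one (namely $|h|^{2\pi i k/\ln|q|}$), altering neither $|R(h)|$ nor the validity of the recurrence. This robustness confirms in particular that the boundary case $|A| = |q|^n$, where $|R(h)| = |h|^n$ exactly, is genuinely covered.
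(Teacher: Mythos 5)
Your proof is correct, and it takes a recognizably different route from the paper's. The paper constructs a function supported only on the geometric orbit: it sets $a = A/q^{n-1}$ and defines $R(h) = a^k h^{n-1}$ when $h = q^k$ for some integer $k$, and $R(h) = 0$ otherwise; the recurrence then holds trivially off the orbit and by a one-line cancellation on it, and the failure of $o(h^n)$ is exhibited along the sequence $h = q^k \to 0$. You instead write down the globally defined dilation eigenfunction $R(h) = |h|^{\beta}$ with $\beta = \ln A/\ln|q|$, which in fact agrees with the paper's function on the orbit (both equal $A^k$ at $h = q^k$, since $a^k (q^k)^{n-1} = A^k$), so your construction is the canonical continuous extension of theirs. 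What your version buys: the sign of $q$ is handled with no case distinction since $R$ depends only on $|h|$; the failure of $o(h^n)$ holds for \emph{all} small $h$, not merely along a sequence; and, since $\operatorname{Re}\beta > n-1 \geq 0$, setting $R(0) = 0$ makes $R$ continuous everywhere, which strengthens the counterexample (it shows continuity cannot rescue the implication inside the annulus, a point relevant to the paper's Section 2 discussion). What the paper's version buys is that it is marginally more elementary, avoiding the complex logarithm and any branch discussion. One small inaccuracy in your writeup: $|h|^{\gamma - n} \geq 1$ holds for $0 < |h| \leq 1$, not for all $h \neq 0$ (when $\gamma < n$ and $|h| > 1$ the quantity is less than $1$); this is immaterial since only the behavior as $h \to 0$ matters, but the quantifier should be corrected.
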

 \begin{proof}
 Let $a=\frac{A}{q^{n-1}}$ and define $$R(h)=\left\{ \begin{array}{cc} a^kh^{n-1} & \text{ if } h=q^k \text{ for some integer $k$} \\  0 & \text {otherwise.}\end{array}\right.$$
 
 If $h=q^k$ and  $h\to 0$ then $k\to -\infty$. The condition $|q|^{n}\geq |A|>|q|^{n-1}$ implies that $|a|>1$ and $\frac{|a|}{|q|}\leq 1$. 
 
 It follows that  $$\frac{|R(h)|}{|h|^{n-1}}=|a|^k\to 0, \text{ and }\frac{|R(h)|}{|h|^{n}}=\left(\frac{|a|}{|q|}\right)^k\not \to 0$$ as $h\to 0$.   Hence $|R(h)|=o(h^{n-1})$, and $|R(h)|\neq o(h^n)$. 
 
It remains to show that $R(qh)=AR(h)$, for $h=q^k$. To that end, $$R(qh)-AR(h)=a^{k+1}q^{n-1}h^{n-1}-Aa^{k}h^{n-1}= a^{k}h^{n-1}(aq^{n-1}-A)=0.$$ 
 \end{proof}

\subsection{Unified trichotomy}

\begin{corollary}\label{cor:trichotomy}
Let \( q \in \mathbb{R} \setminus \{0, \pm1\} \), and let \( D(h), R(h) \) be complex-valued functions satisfying
\[
D(h) = R(qh) - A R(h),
\]
with \( D(h) = o(h^n) \) and \( R(h) = o(h^{n-1}) \). Define the closed annulus
\[
\mathcal{A}_q := 
\begin{cases}
\{ z \in \mathbb{C} : |q|^{n-1} < |z| \leq |q|^n \}, & \text{if } |q| > 1, \\\\
\{ z \in \mathbb{C} : |q|^n \leq |z| < |q|^{n-1} \}, & \text{if } 0 < |q| < 1.
\end{cases}
\]
Then:
\begin{itemize}
  \item If \( |A| \notin \mathcal{A}_q \), then \( R(h) = o(h^n) \).
  \item If \( |A| \in \mathcal{A}_q \), then there exists \( R(h) \) such that \( D(h) = o(h^n) \), \( R(h) = o(h^{n-1}) \), but \( R(h) \not= o(h^n) \).
\end{itemize}
\end{corollary}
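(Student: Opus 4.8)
The plan is to assemble the corollary from the three theorems already proved for $|q| > 1$ and then transfer the entire statement to the regime $0 < |q| < 1$ by the reciprocal substitution recorded at the start of the section. I would treat the two cases separately.

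For $|q| > 1$ the annulus is $\mathcal{A}_q = \{ |q|^{n-1} < |z| \leq |q|^n \}$, and the condition $|A| \notin \mathcal{A}_q$ splits cleanly into $|A| \leq |q|^{n-1}$ and $|A| > |q|^n$. These are exactly the hypotheses of Theorem~\ref{th1} (low-growth) and Theorem~\ref{th2} (high-growth), each of which delivers $R(h) = o(h^n)$; together they establish the first bullet. The second bullet, $|q|^{n-1} < |A| \leq |q|^n$, is precisely the hypothesis of Theorem~\ref{th3}, which produces a function $R$ with $R(qh) = A R(h)$ (so $D \equiv 0$, whence trivially $D(h) = o(h^n)$), $R(h) = o(h^{n-1})$, and $R(h) \neq o(h^n)$. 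Thus for $|q| > 1$ the corollary is just a repackaging of the three theorems.

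For $0 < |q| < 1$ I would set $q' := 1/q$ and $A' := 1/A$, and define $R'(h) := R(qh)$, $D'(h) := -\frac{1}{A} D(h)$ as in the section preamble. A direct substitution turns $D(h) = R(qh) - A R(h)$ into $D'(h) = R'(q' h) - A' R'(h)$, now with $|q'| > 1$. Since $q$ and $-1/A$ are fixed nonzero constants, multiplying the argument or the value of a function by them does not change its order of vanishing at $0$; hence $R(h) = o(h^{n-1}) \Leftrightarrow R'(h) = o(h^{n-1})$, $R(h) = o(h^n) \Leftrightarrow R'(h) = o(h^n)$, and $D(h) = o(h^n) \Leftrightarrow D'(h) = o(h^n)$. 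So the transformed data $(R', D', q', A')$ satisfies the hypotheses of the $|q'| > 1$ case, and the conclusions pull back to $(R, D, q, A)$.

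The one point that genuinely needs checking — and essentially the only place any care is required — is that the reciprocal substitution carries the annulus to the annulus. I would verify that $|A'| \in \mathcal{A}_{q'}$, i.e. $|q'|^{n-1} < |A'| \leq |q'|^n$, is equivalent to $|A| \in \mathcal{A}_q$ in the $0 < |q| < 1$ branch. Writing $|q'| = 1/|q|$ and $|A'| = 1/|A|$, the inequalities $|q|^{1-n} < 1/|A| \leq |q|^{-n}$ invert (with the order reversed, since $t \mapsto 1/t$ is decreasing on the positive reals) to $|q|^n \leq |A| < |q|^{n-1}$, which is exactly the definition of $\mathcal{A}_q$ for $0 < |q| < 1$. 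With this equivalence in hand, Theorems~\ref{th1} and~\ref{th2} applied to the primed data give $R(h) = o(h^n)$ whenever $|A| \notin \mathcal{A}_q$, and Theorem~\ref{th3} applied to the primed data yields, after undoing the substitution via $R(h) = R'(h/q)$, the required counterexample whenever $|A| \in \mathcal{A}_q$. (Alternatively, one checks that the explicit construction of Theorem~\ref{th3} goes through verbatim for $0 < |q| < 1$: with $h = q^k \to 0$ one now has $k \to +\infty$, and the inequalities $|a| < 1$, $|a|/|q| \geq 1$ swap roles but produce the same two limits.) I expect no serious obstacle; the content is entirely in matching the index ranges and the direction of the annulus, so the only real risk is a bookkeeping slip in the reciprocal inequalities rather than any analytic difficulty.
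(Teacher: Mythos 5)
Your proposal is correct and follows essentially the same route the paper intends: the paper states the corollary without a separate proof, deriving the $|q|>1$ case directly from Theorems~\ref{th1}, \ref{th2}, and~\ref{th3}, and handling $0<|q|<1$ via exactly the substitution $R'(h):=R(qh)$, $D'(h):=-\tfrac{1}{A}D(h)$ announced in the section preamble. Your verification that the inversion $|q'|=1/|q|$, $|A'|=1/|A|$ carries the annulus $|q'|^{n-1}<|A'|\leq|q'|^n$ to $|q|^n\leq|A|<|q|^{n-1}$ is precisely the bookkeeping the paper leaves implicit, so nothing further is needed.
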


\subsection{An elementary application}

We conclude this section with an illustrative example showing how the recurrence condition characterizes differentiability.

\begin{theorem}
Let \( f \colon \mathbb{R} \to \mathbb{R} \) be continuous at \( 0 \), with \( f(0) = 0 \), and fix \( A \in \mathbb{R} \). Suppose
\[
\lim_{h \to 0} \frac{f(2h) - A f(h)}{h} = 0.
\]
Then \( f'(0) = 0 \) exists if and only if \( A \notin (1,2] \).
\end{theorem}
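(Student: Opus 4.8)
The plan is to recognize this theorem as a direct application of the recurrence trichotomy (Corollary~\ref{cor:trichotomy}) with the parameters $q = 2$ and $n = 1$, together with the elementary normalization lemma. Setting $R(h) := f(h)$ and $D(h) := f(2h) - A f(h)$, the hypothesis that the limit $\lim_{h\to 0}\bigl(f(2h)-Af(h)\bigr)/h = 0$ is exactly the statement $D(h) = o(h)$, i.e. $D(h) = o(h^n)$ with $n=1$. The condition $R(h) = o(h^{n-1}) = o(h^0) = o(1)$ is precisely the requirement that $f(h) \to 0$ as $h\to 0$, which is guaranteed by continuity of $f$ at $0$ together with $f(0) = 0$. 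Thus both standing hypotheses of the trichotomy are met, and the recurrence $D(h) = R(2h) - A R(h)$ holds by construction.

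With $q = 2$ and $n = 1$, the critical annulus becomes $\mathcal{A}_2 = \{z : |2|^0 < |z| \le |2|^1\} = \{z : 1 < |z| \le 2\}$. Since we are told $A \in \mathbb{R}$, the modulus condition $|A| \in \mathcal{A}_2$ reduces to $1 < |A| \le 2$; but for the forward direction I would want to express the conclusion in terms of $A$ lying in the real interval $(1,2]$. The first step is therefore to show that, under the theorem's restriction to $A \in (1,2]$ as the failure set, the two cases align. The forward implication ($A \notin (1,2] \Rightarrow f'(0) = 0$) follows from the first bullet of the trichotomy: when $|A| \notin \mathcal{A}_2$ we conclude $R(h) = o(h^n) = o(h)$, which is exactly $f(h) = o(h)$, i.e. $f'(0) = 0$.

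For the reverse implication I would argue the contrapositive: if $A \in (1,2]$, then $|A| \in \mathcal{A}_2$, and the second bullet of the trichotomy produces an abstract function $R(h)$ with $R(h) = o(1)$, $D(h) = o(h)$, but $R(h) \ne o(h)$. The one subtlety is that the trichotomy delivers an abstract complex-valued $R$, whereas here I need an honest function $f\colon\mathbb{R}\to\mathbb{R}$ that is continuous at $0$ with $f(0) = 0$. I would set $f := R$ using the explicit construction from Theorem~\ref{th3}: with $a = A/q^{n-1} = A/2^0 = A$, the function $R(h)$ equals $A^k h^{n-1} = A^k$ when $h = 2^k$ and $0$ otherwise. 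I would then verify directly that this $f$ is real-valued (since $A$ is real), satisfies $f(0)=0$, and is continuous at $0$ because $f(h)\to 0$ as $h\to 0$ (which is the established $o(h^0)$ decay); meanwhile $f(h)\ne o(h)$ shows $f'(0)$ cannot be $0$.

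The one genuine point requiring care, and the place I expect to spend the most attention, is the boundary alignment between the complex-modulus annulus $\mathcal{A}_2$ and the stated real interval $(1,2]$, including the endpoints. Specifically, since $A\in\mathbb{R}$, the set $\{A : |A|\in\mathcal{A}_2\}$ is $(1,2]\cup[-2,-1)$, which is strictly larger than $(1,2]$; I must confirm that negative $A$ with $1 < |A| \le 2$ does not arise as a genuine failure of $f'(0)=0$ under the additional real-valuedness and continuity constraints, or else reconcile the statement accordingly. Verifying that the explicit construction indeed yields a real continuous function realizing the failure exactly on the claimed interval, and checking that the endpoints $|A|=1$ (which lies outside $\mathcal{A}_2$, giving differentiability) and $|A|=2$ (which lies inside, giving failure) are handled correctly, is the main bookkeeping obstacle; the underlying analytic content is entirely supplied by the trichotomy.
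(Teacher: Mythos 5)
Your reduction is exactly the paper's proof: the paper sets \( D(h) := f(2h) - A f(h) \), \( R(h) := f(h) \), observes \( D(h) = o(h) \) and \( R(h) = o(1) \) (the latter from continuity at \( 0 \) and \( f(0) = 0 \)), and invokes the trichotomy of Corollary~\ref{cor:trichotomy} with \( n = 1 \), \( q = 2 \), using the explicit construction of Theorem~\ref{th3} (with \( a = A/q^{n-1} = A \)) for the failure direction. Up to the final subtlety you flag, the two arguments coincide line for line, including your endpoint checks (\( |A| = 1 \) lies outside the annulus, \( |A| = 2 \) inside).

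However, the issue you raise at the end is a genuine gap, and you leave it unresolved ("confirm \dots or else reconcile"); the resolution in fact goes against the theorem as stated. For real \( A \), the failure region \( \{A : |A| \in \mathcal{A}_2\} \) is \( (1,2] \cup [-2,-1) \), and the counterexample works verbatim for negative \( A \): with \( f(h) = A^k \) for \( h = 2^k \) (\( k \in \mathbb{Z} \)) and \( f = 0 \) otherwise, \( f \) is real-valued, \( f(0) = 0 \), and \( |f(h)| = |A|^k \to 0 \) as \( h \to 0 \), so \( f \) is continuous at \( 0 \); moreover \( f(2h) - A f(h) \equiv 0 \), yet \( |f(h)|/|h| = (|A|/2)^k \geq 1 \) for \( k \leq 0 \), so \( f'(0) \) does not exist. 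Thus continuity does not rescue \( A \in [-2,-1) \), and the forward implication cannot be salvaged there: for \( A = -2 \), say, neither Theorem~\ref{th1} (which needs \( |A| \leq 1 \)) nor Theorem~\ref{th2} (which needs \( |A| > 2 \)) applies. The paper's own proof glosses over exactly this point — its first bullet reads "if \( A \notin (1,2] \), then \( R(h) = o(h) \)," silently conflating \( A \) with \( |A| \) — so your caution identifies a defect in the printed statement rather than in your argument. To complete your proof you should either restrict to \( A \geq 0 \) or restate the conclusion as: \( f'(0) = 0 \) exists if and only if \( |A| \notin (1,2] \), equivalently \( A \notin (1,2] \cup [-2,-1) \).
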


\begin{proof}
Set \( D(h) := f(2h) - A f(h) \), and \( R(h) := f(h) \). Then \( D(h) = R(2h) - A R(h) \), and the hypothesis gives \( D(h) = o(h) \), with \( R(h) = o(1) \).

Applying the recurrence trichotomy with \( n = 1 \), \( q = 2 \), we conclude:
\begin{itemize}
  \item If \( A \notin (1, 2] \), then \( R(h) = o(h) \), so \( f'(0) = 0 \).
  \item If \( A \in (1, 2] \), construct the counterexample:
  \[
  f(h) :=
  \begin{cases}
  a^k, & \text{if } h = 2^k \, (k \in \mathbb{Z}), \\
  0, & \text{otherwise},
  \end{cases} \quad \text{where } a = A.
  \]
  Then \( f(2h) = A f(h) \), so \( D(h) = 0 = o(h) \), but \( f(h) \not= o(h) \), and \( f'(0) \) does not exist.
\end{itemize}
\end{proof}

\subsection*{Summary}

This section provides a complete classification of when the recurrence \( D(h) = R(qh) - A R(h) \) forces higher-order decay under minimal assumptions. The resulting trichotomy, parameterized by the location of \( |A| \) relative to \( |q|^{n-1} \) and \( |q|^n \), will be crucial in our later classification of Riemann differences with the Marcinkiewicz-Zygmund property. The section also includes an elementary application to first-order difference quotients, illustrating how continuity at the origin can expand the class of operators that imply differentiability.
Notably, the final application refines the known characterization of first-order Riemann differences equivalent to the ordinary derivative (valid for arbitrary functions; see \cite{ACC}) by showing that when continuity at the origin is assumed, the class of differences that imply differentiability expands. In particular, the example \( f(h) = a^k \) on dyadic points demonstrates that continuity can restore the implication of differentiability even for differences that fail the full-function criterion.

\section{Classification for geometric-node differences}

Let \( q \in \mathbb{R} \setminus \{0, \pm 1\} \), and let \( \{a_k\}_{k \geq 0} \) be a finite sequence of complex coefficients, not all zero. Define \( a := -\sum a_k \), and consider the difference operator \( D \) induced by the node-coefficient pairs \( \{(a_k, q^k)\} \cup \{(a, 0)\} \), acting on functions \( f \colon \mathbb{R} \to \mathbb{C} \) by
\[
Df(h) := a f(0) + \sum_{k \geq 0} a_k f(q^k h).
\]
We associate to \( D \) the characteristic polynomial
\[
p(x) := \sum_{k \geq 0} a_k x^k,
\]
so that \( a = -p(1) \) and \( D(1) = 0 \) automatically holds. Our goal is to classify such differences according to whether they possess the Marcinkiewicz-Zygmund property, i.e., whether
\[
f(h) = o(h^{n-1}) \quad \text{and} \quad Df(h) = o(h^n) \quad \Rightarrow \quad f(h) = o(h^n),
\]
for all (real- or complex-valued) functions \( f \).

We now give a complete classification of when the MZ property holds for such differences, in terms of the location of the roots of the characteristic polynomial. This general result includes all Riemann differences supported on geometric progressions \( \{ q^k \} \cup \{0\} \), and applies equally to real or complex coefficients.

Our main tool is the following decomposition lemma.

\begin{lemma}\label{ll1}
Let \( A \) be a root of the characteristic polynomial \( p(x) = \sum a_k x^k \), and write \( p(x) = (x - A) r(x) \). Let \( R \) be the difference induced by the node-coefficient pairs \( \{(r_k, q^k)\} \), where \( r_k \) are the coefficients of \( r(x) \). Then:
\[
Df(h) = Rf(qh) - A Rf(h).
\]
\end{lemma}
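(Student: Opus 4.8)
The plan is to verify the identity by a direct coefficient-matching computation, handling the $q^k$ nodes through an index shift and treating the node $0$ separately. First I would fix notation, writing $r(x) = \sum_{k \geq 0} r_k x^k$ for the quotient polynomial and recalling that $R$ is formed from the sequence $\{r_k\}$ by the same construction used for $D$: one appends the node $0$ with coefficient $b := -\sum_{k \geq 0} r_k = -r(1)$, so that
\[
Rf(h) = b\, f(0) + \sum_{k \geq 0} r_k f(q^k h), \qquad b = -r(1),
\]
and $R(1)=0$ holds automatically, mirroring the role of $a = -p(1)$ for $D$.

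The central observation is that the factorization $p(x) = (x - A) r(x)$ is equivalent, at the level of coefficients, to the recurrence
\[
a_k = r_{k-1} - A r_k \qquad (k \geq 0),
\]
with the convention $r_{-1} = 0$. I would obtain this simply by expanding $(x-A)\sum_k r_k x^k = \sum_k r_{k-1} x^k - A \sum_k r_k x^k$ and comparing coefficients of $x^k$ with $p(x) = \sum_k a_k x^k$; in particular the case $k=0$ gives $a_0 = -A r_0$.

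Next I would compute the two pieces of $Rf(qh) - A Rf(h)$. The dilation $h \mapsto qh$ sends $q^k h \mapsto q^{k+1} h$, so an index shift yields
\[
Rf(qh) = b\, f(0) + \sum_{k \geq 1} r_{k-1} f(q^k h),
\]
while $A Rf(h) = A b\, f(0) + A \sum_{k \geq 0} r_k f(q^k h)$. Subtracting and grouping by node, the coefficient of $f(q^k h)$ is exactly $r_{k-1} - A r_k = a_k$ by the recurrence, and the coefficient of $f(0)$ is $b(1 - A)$. To match the node-$0$ term against $a$, I would evaluate the factorization at $x = 1$, giving $p(1) = (1-A)\,r(1)$, so that $b(1-A) = -(1-A)\,r(1) = -p(1) = a$. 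Hence every coefficient on the two sides agrees and $Rf(qh) - A Rf(h) = Df(h)$.

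I expect the only point requiring genuine care to be the bookkeeping at the node $0$: this term is not produced by the coefficient recurrence (which governs only the $q^k$ nodes) but arises from the residual factor $b(1-A)$, and its value must be pinned down through the evaluation $p(1) = (1-A)r(1)$ together with the normalizations $a = -p(1)$ and $b = -r(1)$. The degenerate case $A = 1$ is automatically consistent, since then $a = -p(1) = 0$ while $b(1-A) = 0$ as well, so the identity survives without needing $b$ to take any special value on the $f(0)$ term.
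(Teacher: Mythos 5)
Your proof is correct and follows essentially the same route as the paper's: expand \( (x-A)r(x) \) to obtain the coefficient recurrence \( a_k = r_{k-1} - A r_k \) (with \( r_{-1} = 0 \)) and match coefficients of \( f(q^k h) \) after the index shift induced by \( h \mapsto qh \). If anything, you are more complete than the paper, whose displayed computation omits the node-\(0\) terms entirely; your explicit check that the \( f(0) \) coefficient \( b(1-A) = -(1-A)r(1) \) equals \( a = -p(1) \) supplies bookkeeping the paper leaves implicit.
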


\begin{proof}
Using the identity \( p(x) = (x - A) r(x) = \sum_{k \geq 1} (r_{k-1} - A r_k) x^k - A r_0 \), we obtain
\[
Df(h) = \sum_{k \geq 1} (r_{k-1} - A r_k) f(q^k h) - A r_0 f(h),
\]
which rearranges as
\[
Df(h) = \sum_{k \geq 0} r_k f(q^{k+1} h) - A \sum_{k \geq 0} r_k f(q^k h) = Rf(qh) - A Rf(h).
\]
\end{proof}

We now state the main result of this section.

\begin{theorem}\label{m}
Let \( q \in \mathbb{R} \setminus \{0, \pm 1\} \), and let \( D \) be the difference operator associated to the characteristic polynomial \( p(x) = \sum a_k x^k \), with node-coefficient pairs \( \{(a_k, q^k)\} \cup \{(-p(1), 0)\} \). Fix an integer \( n \geq 1 \). Then \( D \) has the Marcinkiewicz-Zygmund property of order \( n \) if and only if \( p(x) \) has no roots in the closed annulus
\[
\left\{ z \in \mathbb{C} : |q|^{n-1} < |z| \leq |q|^n \right\}.
\]
\end{theorem}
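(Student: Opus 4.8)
The plan is to prove both implications by reducing everything to the recurrence trichotomy of Corollary~\ref{cor:trichotomy} via the factorization in Lemma~\ref{ll1}. Throughout I would assume $f(0)=0$: since $Df(h)=af(0)+\sum_k a_k f(q^k h)$ and each $f(q^k h)\to 0$ under the hypothesis $f=o(h^{n-1})$, the requirement $Df(h)=o(h^n)$ forces $af(0)=0$, so either $a=-p(1)\neq 0$ and $f(0)=0$ automatically, or $a=0$ and the value $f(0)$ never enters $D$; in both cases we may set $f(0):=0$. With $f(0)=0$ the difference $D$ agrees with the difference $R^{(0)}$ induced by the full coefficient sequence of $p$ (no node at $0$). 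I would then write $p(x)=a_m\prod_{i=1}^m(x-A_i)$ with the roots $A_i$ listed with multiplicity and $a_m\neq 0$ the leading coefficient, and for $0\le j\le m$ set $r^{(j)}(x):=a_m\prod_{i>j}(x-A_i)$, with $R^{(j)}$ the difference induced by the coefficients of $r^{(j)}$. Thus $r^{(0)}=p$ and $r^{(m)}=a_m$, so $R^{(0)}f=Df$ and $R^{(m)}f(h)=a_m f(h)$, while $r^{(j-1)}=(x-A_j)r^{(j)}$.

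For sufficiency, assume $p$ has no root in the critical annulus $\mathcal{A}_q$. I would prove by induction on $j$ that $R^{(j)}f=o(h^n)$ for all $0\le j\le m$. The base case $j=0$ is the hypothesis $Df=o(h^n)$. For the step, Lemma~\ref{ll1} applied to $r^{(j-1)}=(x-A_j)r^{(j)}$ yields exactly
\[
R^{(j-1)}f(h)=R^{(j)}f(qh)-A_j\,R^{(j)}f(h),
\]
which is the recurrence of Corollary~\ref{cor:trichotomy} with the roles $D(h)=R^{(j-1)}f(h)$, $R(h)=R^{(j)}f(h)$, and $A=A_j$. Its two hypotheses hold: $R^{(j-1)}f=o(h^n)$ by the inductive assumption, and $R^{(j)}f=o(h^{n-1})$ for free, since $R^{(j)}f(h)$ is a finite linear combination of terms $f(q^k h)=o((q^k h)^{n-1})=o(h^{n-1})$. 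As $A_j\notin\mathcal{A}_q$, the trichotomy upgrades this to $R^{(j)}f=o(h^n)$. Taking $j=m$ gives $a_m f(h)=R^{(m)}f(h)=o(h^n)$, hence $f(h)=o(h^n)$, which is the MZ property.

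For necessity, suppose $A$ is a root of $p$ with $|A|\in\mathcal{A}_q$. Rather than route Theorem~\ref{th3} through the operator, I would build the counterexample directly from $p(A)=0$. Define $f(q^k):=A^k$ for $k\in\mathbb{Z}$ and $f:=0$ elsewhere (so $f(0)=0$). For $h=q^j$ one computes $Df(q^j)=\sum_k a_k f(q^{k+j})=A^j\sum_k a_k A^k=A^j p(A)=0$, and for $h$ off the grid $\{q^k\}$ every node $q^k h$ also lies off the grid, so $Df(h)=0$; hence $Df\equiv 0=o(h^n)$. Along $h=q^k\to 0$ one has $f(h)/h^{n-1}=(A/q^{n-1})^k$ and $f(h)/h^{n}=(A/q^{n})^k$, with $f$ vanishing off the grid, so $|A|\in\mathcal{A}_q$ translates precisely into $f(h)=o(h^{n-1})$ together with $f(h)\neq o(h^n)$. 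Thus $D$ fails the MZ property.

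As for the main obstacle, I expect the difficulties to be bookkeeping rather than conceptual. The heart of the argument is making the induction chain correctly: one must keep the recurrence in the exact normal form of Corollary~\ref{cor:trichotomy} at every stage and recognize that the lower decay $o(h^{n-1})$ is automatic while the upper decay $o(h^n)$ is precisely the quantity that propagates. The other careful points are the boundary conventions and the case $0<|q|<1$: the stated annulus $\{|q|^{n-1}<|z|\le|q|^n\}$ is $\mathcal{A}_q$ for $|q|>1$, and for $0<|q|<1$ one reads $\mathcal{A}_q=\{|q|^n\le|z|<|q|^{n-1}\}$ as in the corollary, the inclusive/exclusive endpoints being exactly what make the necessity computation with $(A/q^{n-1})^k$ and $(A/q^n)^k$ come out right; the substitution $R'(h)=R(qh)$ already used there disposes of $|q|<1$ with no extra work. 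Finally, complex roots of a real $p$ pose no issue, since $f$ is permitted to be complex-valued and the trichotomy allows $A\in\mathbb{C}$.
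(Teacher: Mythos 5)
Your proof is correct and takes essentially the same route as the paper: sufficiency by factoring out one root at a time via Lemma~\ref{ll1} and applying the recurrence trichotomy (Theorems~\ref{th1} and~\ref{th2}, i.e.\ Corollary~\ref{cor:trichotomy}) inductively down the chain of quotient polynomials, and necessity via the function $f(q^k)=A^k$, which is exactly the counterexample of Theorem~\ref{th3} in disguise (the paper's $R(q^k)=(A/q^{n-1})^k q^{k(n-1)}=A^k$), with $Df\equiv 0$ following from $p(A)=0$ just as in the paper. Your explicit reduction to $f(0)=0$ and your reading of the critical annulus when $0<|q|<1$ tidy up two points the paper leaves implicit, but the substance of the argument is identical.
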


\begin{proof}
Suppose first that \( p(x) \) has a root \( A \) with \( |q|^{n-1} < |A| \leq |q|^n \). By Theorem~\ref{th3}, there exists a complex-valued function \( f(h) \) such that \( f(h) = o(h^{n-1}) \), \( f(h) \not= o(h^n) \), and \( f(qh) = A f(h) \). Using this in \( Df(h) = \sum a_k f(q^k h) = \sum a_k A^k f(h) = p(A) f(h) = 0 \), we conclude that \( D \) fails the MZ property.

Now assume that all roots of \( p(x) \) lie either inside \( |z| \leq |q|^{n-1} \) or outside \( |z| > |q|^n \). We proceed by induction on the degree of \( p(x) \).

If \( p(x) = a_0(x - A) \), then \( Df(h) = a_0(f(qh) - A f(h)) \), and by Theorem~\ref{th1} or~\ref{th2}, we conclude \( f(h) = o(h^n) \).

In the general case, write \( p(x) = (x - A) r(x) \), and define the corresponding difference \( R \) as in Lemma~\ref{ll1}. Since \( f(h) = o(h^{n-1}) \), linearity gives \( Rf(h) = o(h^{n-1}) \). From Lemma~\ref{ll1} we have \( Df(h) = Rf(qh) - A Rf(h) = o(h^n) \).

By Theorem~\ref{th1} or~\ref{th2}, it follows that \( Rf(h) = o(h^n) \), and by the induction hypothesis (applied to \( r(x) \)), we conclude \( f(h) = o(h^n) \).
\end{proof}

\begin{remark}
Although the proof above allows complex-valued functions, the counterexample construction in the failing case can always be reduced to a real-valued function by taking real or imaginary parts. Hence, Theorem~\ref{m} also classifies when the MZ property holds for real-valued functions.
\end{remark}

\begin{corollary}\label{cor:symmetric-geo}
Let \( q \in \mathbb{R} \setminus \{0, \pm 1\} \), and let \( D(h) \) be a difference of the form
\[
Df(h) = a f(0) + \sum_{k \geq 0} \left( a_k f(q^k h) + a_{-k} f(-q^k h) \right),
\]
with \( a_k, a_{-k} \in \mathbb{C} \), and where \( a = -\sum_{k \geq 0} (a_k + a_{-k}) \). Let
\[
r^+(x) := \sum_{k \geq 0} (a_k + a_{-k}) x^k, \quad r^-(x) := \sum_{k \geq 0} (a_k - a_{-k}) x^k.
\]
Then \( D \) has the Marcinkiewicz-Zygmund property of order \( n \) if and only if all roots of \( r^+(x) \) and \( r^-(x) \) lie outside the closed annulus
\[
\left\{ z \in \mathbb{C} : |q|^{n-1} < |z| \leq |q|^n \right\}.
\]
\end{corollary}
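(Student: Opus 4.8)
The plan is to reduce the symmetric difference to two instances of the one-sided classification in Theorem~\ref{m} by splitting every test function into its even and odd parts. Write \( f = f_e + f_o \) with \( f_e(h) = \tfrac{1}{2}(f(h)+f(-h)) \) and \( f_o(h) = \tfrac{1}{2}(f(h)-f(-h)) \). Since \( f_e(\pm q^k h) \) agree while \( f_o(\pm q^k h) \) differ by a sign, grouping the paired nodes \( \pm q^k \) collapses the symmetric difference into
\[
Df(h) = D^+f_e(h) + D^- f_o(h),
\]
where \( D^+ g(h) := a\,g(0) + \sum_k (a_k+a_{-k}) g(q^k h) \) has characteristic polynomial \( r^+ \), and \( D^- g(h) := \sum_k (a_k - a_{-k}) g(q^k h) \) has characteristic polynomial \( r^- \). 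Here \( a = -r^+(1) \) is exactly the \( 0 \)-node normalization of Theorem~\ref{m}, and the odd operator needs no \( 0 \)-node term because \( f_o(0)=0 \) (so it agrees with the theorem's operator \( -r^-(1)g(0) + \sum (a_k-a_{-k}) g(q^k h) \) when applied to odd inputs). Both \( D^+ \) and \( D^- \) are thus genuine geometric-node differences supported on \( \{q^k\}\cup\{0\} \), and Theorem~\ref{m} applies to each.

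The decisive observation is a parity one: \( D^+ f_e \) is even in \( h \) and \( D^- f_o \) is odd in \( h \), so they are \emph{precisely} the even and odd parts of \( Df \). Consequently the single hypothesis \( Df(h)=o(h^n) \) splits into two: from \( |D^+f_e(h)| \le \tfrac12(|Df(h)|+|Df(-h)|) \) together with \( |{-h}|^n=|h|^n \) we get \( D^+ f_e(h) = o(h^n) \), and likewise \( D^- f_o(h)=o(h^n) \). Combined with \( f_e(h), f_o(h) = o(h^{n-1}) \), which is immediate from \( f(h)=o(h^{n-1}) \), this realizes the full MZ hypotheses for \( D^+ \) and \( D^- \) separately. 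For the sufficiency direction, assume neither \( r^+ \) nor \( r^- \) has a root in the annulus. Then Theorem~\ref{m} forces \( f_e(h)=o(h^n) \) and \( f_o(h)=o(h^n) \), whence \( f(h)=f_e(h)+f_o(h)=o(h^n) \); thus \( D \) has the MZ property.

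For necessity I argue by contraposition: if some root \( A \) of \( r^+ \) or \( r^- \) lies in \( |q|^{n-1}<|A|\le|q|^n \), I construct a parity-pure counterexample. Using Theorem~\ref{th3}, fix \( R \) with \( R(h)=o(h^{n-1}) \), \( R(h)\ne o(h^n) \), and \( R(qh)=A\,R(h) \). If \( A \) is a root of \( r^+ \), set \( f(h):=R(h)+R(-h) \) (even); if \( A \) is a root of \( r^- \), set \( f(h):=R(h)-R(-h) \) (odd). In either case \( f(h)=o(h^{n-1}) \), and since \( -q^k \) is never a power of \( q \) for \( |q|\neq 1 \) we have \( R(-q^k)=0 \), so \( f \) agrees with \( R \) along \( h=q^k \) and hence \( f(h)\ne o(h^n) \). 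Iterating \( R(q^m h)=A^m R(h) \) at both \( h \) and \( -h \) gives \( f(q^m h)=A^m f(h) \), so in the even case \( Df=D^+f=r^+(A)f\equiv 0 \) and in the odd case \( Df=D^- f=r^-(A)f\equiv 0 \) (the \( a f(0) \) term vanishing as \( f(0)=0 \)). Either way \( Df\equiv 0 = o(h^n) \), exhibiting the failure of the MZ property.

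The only genuinely load-bearing step is the parity splitting of the hypothesis \( Df(h)=o(h^n) \) into the two independent decay conditions \( D^+f_e = o(h^n) \) and \( D^- f_o = o(h^n) \): once this is in place the problem decouples cleanly into two copies of Theorem~\ref{m}, and the remaining verifications (transfer of the \( o(h^{n-1}) \) bounds, matching of the \( 0 \)-node normalization \( a=-r^+(1) \), and the behavior of the symmetrized counterexamples) are routine. For \( 0<|q|<1 \) the argument runs verbatim with the corresponding annulus and the matching form of the Theorem~\ref{th3} construction, exactly as in the reduction used throughout Section~3.
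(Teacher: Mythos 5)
Your proof is correct and follows essentially the same route as the paper: decompose \( f \) into even and odd parts, identify \( D^+f_e \) and \( D^-f_o \) (with symbols \( r^+ \), \( r^- \)) as the even and odd parts of \( Df \) so that the hypotheses split, and invoke Theorem~\ref{m} for each. Your necessity argument is in fact slightly more explicit than the paper's --- you build the parity-pure counterexample directly from Theorem~\ref{th3} and verify \( R(-q^k)=0 \) --- but this is exactly the mechanism the paper's appeal to the ``if and only if'' in Theorem~\ref{m} implicitly relies on, so it is the same proof.
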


\begin{proof}
Apply the identity:
\[
Df(h) = \frac{1}{2} \left[ D(h) + D(-h) \right] + \frac{1}{2} \left[ D(h) - D(-h) \right],
\]
and observe:
\[
D(h) + D(-h) = 2a f(0) + \sum_{k \geq 0} (a_k + a_{-k}) \left[ f(q^k h) + f(-q^k h) \right],
\]
\[
D(h) - D(-h) = \sum_{k \geq 0} (a_k - a_{-k}) \left[ f(q^k h) - f(-q^k h) \right].
\]
Define \( f_{\mathrm{even}}(h) := f(h) + f(-h) \) and \( f_{\mathrm{odd}}(h) := f(h) - f(-h) \). Then:
\[
\frac{1}{2}(D(h) + D(-h)) = D^+f_{\mathrm{even}}(h), \quad \frac{1}{2}(D(h) - D(-h)) = D^- f_{\mathrm{odd}}(h),
\]
where \( D^+ \), \( D^- \) are differences with characteristic polynomials \( r^+(x) \), \( r^-(x) \) respectively.

Now apply Theorem~\ref{m} to each of \( D^+ \) and \( D^- \). Since \( f(h) = o(h^{n-1}) \) implies that both \( f_{\mathrm{even}}(h) \) and \( f_{\mathrm{odd}}(h) \) are \( o(h^{n-1}) \), and since
\[
Df(h) = o(h^n) \quad \Rightarrow \quad D^+ f_{\mathrm{even}}(h) = o(h^n), \quad D^- f_{\mathrm{odd}}(h) = o(h^n),
\]
we conclude \( f(h) = o(h^n) \)  if both \( r^+(x) \) and \( r^-(x) \) have no roots in the annulus \( |q|^{n-1} < |z| \leq |q|^n \).

Now suppose that $r^+(x)=\sum c_jx^j$ has a root $A$ inside \( |q|^{n-1} < |z| \leq |q|^n \). Let  $a=\frac{A}{q^{n-1}}$, and define $f(h)=a^kh^{n-1}$ if $h=q^k$ for some $k\in \mathbb{Z}$, $f(-h)=f(h)$ and 0 otherwise. Then $f(h)=o(h^{n-1})$ and $f(h) \neq o(h^n)$. We also have $f_{even}(h)=2a^kh^{n-1}$ if $h=q^k$ for some $k\in \mathbb{Z}$ and 0 otherwise, $f_{odd}(h)=0$ and 

\[Df(h)=D^+f_{even}(h)=2\sum c_j \frac{A^{k+j}}{q^{(k+j)(n-1)}}q^{j(n-1)}h^{n-1}=\]
\[2\sum c_j A^j\frac{A^k}{q^{k(n-1)}}h^{n-1}=2r^+(A)\frac{A^k}{q^{k(n-1)}}h^{n-1}=0.\]

 Simillarly if $r^-(x)=\sum c_jx^j$ has a root $A$ inside \( |q|^{n-1} < |z| \leq |q|^n \), we can construct $f(h)=o(h^{n-1})$ and $f(h) \neq o(h^n)$ and such that $Df(h)=0$.
\end{proof}

\subsection{Applications to explicit Riemann differences}

We now illustrate Theorem~\ref{m} and Corollary~\ref{cor:symmetric-geo} with explicit examples of rescaled Riemann differences of order \( n = 4 \), each supported on five geometric nodes. These examples highlight the central case in which the number of nodes is exactly one more than the order --- the minimal configuration required for a difference of order \( n \). The MZ property in such cases can be verified directly from the root location conditions established earlier.

\begin{example}
Let \( q = 2 \), and consider the difference
\[
Df(h) = f(32h) - \frac{620}{7} f(8h) + 1984 f(2h) - \frac{23040}{7} f(h) + 1395 f(0).
\]
This is a rescaled Riemann difference of order \( 4 \) with nodes \( \{32, 8, 2, 1, 0\} \). The characteristic polynomial is
\[
p(x) = x^5 - \frac{620}{7} x^3 + 1984 x - \frac{23040}{7},
\]
whose roots are
\[
\{2, 4, 8, \tfrac{-49 \pm \sqrt{119}i}{7} \}.
\]
The complex roots have magnitude approximately \( \frac{6}{7} \sqrt{70} \approx 7.17 \), which is less than \( 2^3 = 8 \). Since all roots lie outside the critical annulus \( 2^3 < |z| \leq 2^4 \), Theorem~\ref{m} implies that \( D \) satisfies the MZ property.
\end{example}

\begin{example}
Let \( q = 2 \), and consider the difference
\[
Df(h) = f(4h) - 10 f(2h) + 20 f(h) - 15 f(0) + 4 f(-h),
\]
a rescaled Riemann difference of order \( 4 \) with nodes \( \{-1, 0, 1, 2, 4\} \). Define the symbol polynomials:
\[
r^+(x) = x^2 - 10x + 24 = (x - 4)(x - 6), \quad r^-(x) = x^2 - 10x + 16 = (x - 2)(x - 8).
\]
All roots of \( r^+(x) \) and \( r^-(x) \) lie outside the annulus \( 2^3 < |z| \leq 2^4 \). Hence, by Corollary~\ref{cor:symmetric-geo}, this difference also satisfies the MZ property.
\end{example}

\begin{remark}
These examples demonstrate how Theorem~\ref{m} and Corollary~\ref{cor:symmetric-geo} can be applied to verify the MZ property in concrete cases. Differences of order \( n \) with \( n+1 \) geometric nodes form a particularly natural and fundamental class, and their classification under the MZ property is a central focus of this paper.
\end{remark}

\section{Riemann differences with MZ property}

It is known that Riemann differences of order $n$ with the nodes $\{0, 1, q, \ldots, q^{n-1}\}$ and those with the nodes $\{1, q, \ldots, q^n\}$, for $q \notin \{-1, 0, 1\}$, satisfy the MZ property. It was once conjectured that these were the only Riemann differences with the MZ property. (see \cite{ACF1}) This conjecture was supported by the failure of symmetric Riemann differences to satisfy the MZ property and by recent results showing that forward Riemann differences for $n \geq 3$ also fail the MZ property. (see \cite{CF1})

However, the conjecture is false: a third-order Riemann difference with nodes $\{-1, 0, 1, q\}$ satisfies the MZ property. In the previous section, we provided two examples of fourth-order Riemann differences with five nodes that satisfy the MZ property and are not of the two classical forms mentioned above.

If one drops the requirement that a difference be Riemann, it is easy to construct differences of any order that satisfy the MZ property, as we now illustrate.

\begin{lemma}\label{l13}
A difference $Df(h) = a f(0) + \sum a_k f(q^k h)$, with $a = -\sum a_k$, is of order $n$ if and only if its characteristic polynomial is of the form
\[
p(x) = \prod_{j = 1}^{n - 1}(x - q^j) \cdot t(x),
\]
where $t(x)$ is a polynomial such that $t(q^n) \neq 0$.
\end{lemma}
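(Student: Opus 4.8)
The plan is to reduce the order condition to a set of moment equations on the characteristic polynomial evaluated at powers of \( q \), and then read off the factorization from the factor theorem. The starting point is the action of \( D \) on monomials. For \( f(x) = x^m \) with \( m \geq 1 \) one has \( f(0) = 0 \), so
\[
Df(h) = \sum_{k \geq 0} a_k (q^k h)^m = h^m \sum_{k \geq 0} a_k (q^m)^k = p(q^m)\, h^m,
\]
while for \( m = 0 \) the normalization \( a = -\sum a_k \) gives \( Df(h) = a + \sum_k a_k = 0 \). Thus \( D \) annihilates the constant monomial automatically, and on every higher monomial its action is governed entirely by the values \( p(q), p(q^2), \dots \).

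Next I would translate the definition of ``order \( n \)'' into these values. Since \( D \) is linear, it annihilates every polynomial of degree at most \( n-1 \) if and only if it annihilates each monomial \( x^m \) for \( 0 \leq m \leq n-1 \), i.e. if and only if \( p(q^m) = 0 \) for \( 1 \leq m \leq n-1 \) (the case \( m = 0 \) being automatic). The requirement that \( D \) act nontrivially on \( x^n \) becomes \( D(x^n)(h) = p(q^n)\, h^n \) with \( p(q^n) \neq 0 \). Hence ``\( D \) is of order \( n \)'' is exactly the conjunction
\[
p(q) = p(q^2) = \cdots = p(q^{n-1}) = 0, \qquad p(q^n) \neq 0.
\]

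The final step converts these conditions into the stated factorization, and here the one point requiring care is the distinctness of the nodes \( q^1, \dots, q^{n-1}, q^n \). Because \( q \in \mathbb{R} \setminus \{0, \pm 1\} \) we have \( |q| \neq 1 \), so \( q^i = q^j \) forces \( i = j \); thus \( q, q^2, \dots, q^n \) are pairwise distinct. Given distinctness, the vanishing conditions \( p(q^j) = 0 \) for \( 1 \leq j \leq n-1 \) together with the factor theorem show that the distinct linear factors \( x - q^j \) each divide \( p \), so \( \prod_{j=1}^{n-1}(x - q^j) \) divides \( p(x) \); writing \( p(x) = \prod_{j=1}^{n-1}(x - q^j)\, t(x) \) and evaluating at \( q^n \) yields \( p(q^n) = \prod_{j=1}^{n-1}(q^n - q^j)\, t(q^n) \), where the product is nonzero by distinctness, so \( p(q^n) \neq 0 \iff t(q^n) \neq 0 \). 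Conversely, any \( p \) of the stated form satisfies the moment conditions by direct evaluation. I expect the distinctness observation to be the only genuinely essential use of the hypothesis \( q \neq 0, \pm 1 \); the remainder is routine bookkeeping.
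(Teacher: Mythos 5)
Your proposal is correct and follows essentially the same route as the paper's proof: compute $D(x^m)(h) = p(q^m)h^m$ on monomials, translate ``order $n$'' into the conditions $p(q^j) = 0$ for $1 \leq j \leq n-1$ and $p(q^n) \neq 0$, and obtain the factorization via the factor theorem. The only difference is that you spell out the distinctness of $q, q^2, \dots, q^n$ (using $q \neq 0, \pm 1$) and the converse evaluation, details the paper leaves implicit.
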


\begin{proof}
By definition, $D$ is of order $n$ if and only if $D(x^j)(h) \equiv 0$ for $j = 0, 1, \ldots, n - 1$ and $D(x^n)(h) \not\equiv 0$. The condition for $j = 0$ is equivalent to $a = -p(1)$.

For $j \geq 1$, we compute
\[
D(x^j)(h) = \left( \sum a_k (q^k)^j \right) h^j = p(q^j) h^j.
\]
So $p(q^j) = 0$ for $j = 1, \ldots, n - 1$, and $p(q^n) \neq 0$, which implies the stated factorization.
\end{proof}

Lemma~\ref{l13} can be used to generate differences of order $n$. For example, to construct a difference of order $3$ with nodes generated by $q = 2$, we take any polynomial $q(x)$ such that $q(8) \neq 0$ and define
\[
p(x) = (x - 2)(x - 4)q(x) = \sum a_k x^k,
\]
then set $Df(h) = -p(1) f(0) + \sum a_k f(2^k h)$. To ensure that $D$ satisfies the MZ property, choose $q(x)$ so that $p(x)$ has no roots in the interval $(4, 8]$.

An obvious question is whether all Riemann differences with geometric nodes satisfy the MZ property. The following example shows this is not the case.

\begin{example}
Consider the difference
\[
Df(h) = f(16h) - 120 f(2h) + 224 f(h) - 105 f(0),
\]
with nodes $\{0, 1, 2, 2^4\}$. The associated characteristic polynomial is
\[
p(x) = x^4 - 120x + 224,
\]
whose roots are $\{2, 4, -3 \pm i \sqrt{19}\}$, with the complex roots having modulus approximately $5.29$. Since $4 < |z| < 8$ for these roots, the difference fails the MZ property.
\end{example}

Next, we considerably enlarge the class of Riemann differences that satisfy the MZ property.

Each such difference will have nodes from the set \(\{-q^s,-q^{s-1},\ldots,-1,0,1,\)

\noindent \(q,q^2,\ldots, q^k\}\) and it will take on the form
\[
Df(h) = a f(0) + \sum_{k = 0}^t \left( a_k f(q^k h) + a_{-k} f(-q^k h) \right),
\]
where we assume $a_{-k} = 0$ for $s < k \leq t<n$, and for simplicity, we take $a_t = 1$ to ensure $r^+(x)$ and $r^-(x)$ are monic. By Corollary~\ref{cor:symmetric-geo}, the MZ property reduces to a condition on the roots of the even and odd symbol polynomials:
\[
r^+(x) = \sum_{k \geq 0} (a_k + a_{-k}) x^k, \quad r^-(x) = \sum_{k \geq 0} (a_k - a_{-k}) x^k.
\]

We begin by identifying the structure of the roots of $r^+$ and $r^-$:

\begin{lemma}\label{l5.4}
Let $D$ be a difference of order $n$ with nodes in $\{\pm q^k : k \in \mathbb{N}_0\} \cup \{0\}$. Then:
\begin{itemize}
  \item $q^{2j}$ is a root of $r^+(x)$ for all even $2j < n$,
  \item $q^{2j+1}$ is a root of $r^-(x)$ for all odd $2j + 1 < n$.
\end{itemize}
\end{lemma}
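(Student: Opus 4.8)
The plan is to read the roots of $r^+$ and $r^-$ directly off the order-$n$ annihilation conditions by evaluating $D$ on monomials, exactly as in the one-sided computation behind Lemma~\ref{l13}, but keeping track of parity. First I would compute $D(x^m)(h)$ for each $m \geq 0$. The origin node contributes $a\cdot 0^m$, which vanishes for every $m \geq 1$, so that node affects only the $m=0$ equation; for $m \geq 1$ it drops out entirely. Using $(\pm q^k)^m = (\pm 1)^m q^{km}$ and factoring out $h^m$, I obtain
\[
D(x^m)(h) = \left[ \sum_{k \geq 0} \bigl(a_k + (-1)^m a_{-k}\bigr)(q^m)^k \right] h^m \qquad (m \geq 1).
\]

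The key step is to recognize the bracketed sum as one of the symbol polynomials evaluated at $q^m$, with the parity of $m$ selecting which one. When $m$ is even, $(-1)^m = 1$ and the coefficient of $(q^m)^k$ is $a_k + a_{-k}$, so the bracket equals $r^+(q^m)$; when $m$ is odd, $(-1)^m = -1$ gives $a_k - a_{-k}$, so the bracket equals $r^-(q^m)$. Hence $D(x^m)(h) = r^+(q^m)\,h^m$ for even $m \geq 1$, and $D(x^m)(h) = r^-(q^m)\,h^m$ for odd $m$. This parity split mirrors the even/odd decomposition of Corollary~\ref{cor:symmetric-geo}, where $D^+$ and $D^-$ carry symbols $r^+$ and $r^-$ respectively.

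Now I would invoke the definition of order $n$: $D$ annihilates $x^m$ for every $1 \leq m \leq n-1$. Substituting each such $m$ into the formula forces $r^+(q^m) = 0$ whenever $m$ is even and $r^-(q^m) = 0$ whenever $m$ is odd. Writing $m = 2j$ in the even case and $m = 2j+1$ in the odd case yields precisely the two claimed families: $q^{2j}$ is a root of $r^+$ for $1 \leq 2j < n$, and $q^{2j+1}$ is a root of $r^-$ for $2j+1 < n$. Since the smallest odd index is $m=1$, the odd family has no boundary issue.

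The one point requiring care — which I would flag as the genuine subtlety rather than a real obstacle — is the even index $2j = 0$. The value $m = 0$ is not governed by the displayed formula, because the origin node enters there; the equation $D(x^0)=0$ reduces to $a = -r^+(1)$, which holds automatically by the definition of $a$ and does \emph{not} force $1$ to be a root of $r^+$. Thus the substantive content of the even case lies in the range $1 \leq 2j < n$, i.e.\ $j \geq 1$, with the $j=0$ instance absorbed by the node at the origin rather than witnessed by a root of $r^+$. I would state the even family with this understanding.
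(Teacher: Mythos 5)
Your proof is correct and is essentially the paper's argument: the paper forms $D(x^{2j})(h)+D(x^{2j})(-h)=2h^{2j}r^+(q^{2j})$ and $D(x^{2j+1})(h)-D(x^{2j+1})(-h)=2h^{2j+1}r^-(q^{2j+1})$, which, since $D(x^m)(-h)=(-1)^m D(x^m)(h)$ for $m\geq 1$, is exactly your direct computation $D(x^m)(h)=r^{\pm}(q^m)\,h^m$ sorted by parity, followed by the same appeal to the order-$n$ annihilation conditions. Your caveat about $2j=0$ is a genuine refinement that the paper's proof silently skips (when $0$ is a node, the $m=0$ equation only yields $a=-r^+(1)$ rather than $r^+(1)=0$), and it matches how the paper actually uses the lemma in Theorem~\ref{t99}, where $1$ being a root of $r^+$ is asserted only when $0$ is not among the nodes.
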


\begin{proof}
By definition, $D(x^j)(h) = 0$ for $j < n$. Using symmetry and linearity:
\[
D(x^{2j})(h) + D(x^{2j})(-h) = 2 h^{2j} r^+(q^{2j}) = 0,
\]
\[
D(x^{2j+1})(h) - D(x^{2j+1})(-h) = 2 h^{2j+1} r^-(q^{2j+1}) = 0.
\]
\end{proof}

\begin{lemma}[Alternating interpolation zeros]\label{l1}
Let \( c_1 < c_2 < \cdots < c_{m+1} \), and let \( p \) be the unique polynomial of degree at most \( m \) satisfying
\[
p(c_i) =
\begin{cases}
0, & \text{if } i \text{ is odd}, \\
1, & \text{if } i \text{ is even}.
\end{cases}
\]
Then \( \deg(p) = m \), and there exist points \( d_1 < d_2 < \cdots < d_{m-1} \) in \( (c_1, c_{m+1}) \) such that
\[
p(d_i) =
\begin{cases}
1, & \text{if } i \text{ is odd}, \\
0, & \text{if } i \text{ is even}.
\end{cases}
\]
\end{lemma}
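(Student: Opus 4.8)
The plan is to handle the two assertions in turn, proving the degree claim first, since once the interpolation is known to be nondegenerate the location of the points $d_i$ becomes essentially immediate. For the degree, I would pass to the shifted polynomial $g(x) := p(x) - \tfrac12$, which again has degree at most $m$. At the nodes it takes the values $g(c_i) = -\tfrac12$ for odd $i$ and $g(c_i) = +\tfrac12$ for even $i$, so any two consecutive nodes carry values of opposite sign. By the intermediate value theorem, $g$ then has at least one zero in each of the $m$ open intervals $(c_i, c_{i+1})$ for $i = 1, \dots, m$; these intervals are disjoint, so $g$ has at least $m$ distinct real roots. Since $g$ is not identically zero (it attains $\pm\tfrac12$), a nonzero polynomial with $m$ distinct roots has degree at least $m$, and together with $\deg g \le m$ this forces $\deg g = m$, whence $\deg p = m$.

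For the points $d_i$, I would simply take the interior nodes: set $d_i := c_{i+1}$ for $i = 1, \dots, m-1$. There are exactly $m-1$ of them, they satisfy $c_1 < d_1 < \cdots < d_{m-1} < c_{m+1}$, and the index $i+1$ is even precisely when $i$ is odd, so $p(d_i) = p(c_{i+1})$ equals $1$ when $i$ is odd and $0$ when $i$ is even, which is exactly the required alternating pattern. Thus the existence of the $d_i$ needs no work beyond reindexing the given interpolation data.

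The main — and essentially only — obstacle is the degree claim, for which the sign-change count via the intermediate value theorem applied to $p - \tfrac12$ is the decisive device. I would flag one subtlety for the sake of the later use of the lemma: it is important that the $d_i$ are permitted to coincide with nodes, because $p$ need not strictly overshoot the values $0$ and $1$ between consecutive nodes (for instance, $p$ may have a local extremum of value exactly $1$ at an even node), so in general one cannot force the $d_i$ to lie strictly inside the gaps $(c_i, c_{i+1})$. Consequently any downstream argument should depend only on the prescribed level values at the $d_i$ and on their interlacing with the $c_i$, not on their being distinct from the nodes.
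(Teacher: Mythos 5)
Your degree argument is correct and, if anything, cleaner than the paper's: the shifted polynomial $g = p - \tfrac12$ alternates in sign at the $m+1$ nodes, so the intermediate value theorem produces $m$ distinct roots in the $m$ gaps $(c_i, c_{i+1})$, forcing $\deg p = m$. The paper obtains the degree only at the end of its construction, from $m$ zeros counted with multiplicity, and its opening remark that interpolating $m+1$ values at distinct points forces degree at least $m$ is not by itself a valid step (constant data is a counterexample); your sign-change count is the right way to settle that claim.

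The second assertion is where your proposal has a genuine gap. Setting $d_i := c_{i+1}$ does satisfy the statement as literally written, but it trivializes the lemma and destroys the property for which it is invoked. In the proof of Lemma~\ref{l2} (the base case $s = m-1$) the points $d_i$ are taken to be the \emph{remaining roots} of $p$ and of $p-1$: the odd-indexed $c_i$ together with the even-indexed $d_i$ must exhaust all $m$ roots of $p$ counted with multiplicity, and likewise the even-indexed $c_i$ with the odd-indexed $d_i$ for $p-1$, so that Newton's identities (applied to two monic polynomials differing only in the constant term) give matching power sums up to degree $m-1$. Your choice fails this already for $m = 2$: with data $p(c_1) = 0$, $p(c_2) = 1$, $p(c_3) = 0$, the required point is the second solution of $p = 1$, namely $d_1 = c_1 + c_3 - c_2$, and the moment identity $x_1 + c_2 = c_1 + c_3$ is false for $x_1 = c_2$ unless $c_2$ happens to be the midpoint. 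This is exactly why the paper's proof works harder: between consecutive nodes it extracts a genuinely new solution of $p = 1$ (via a local maximum exceeding $1$) or of $p = 0$ (via a local minimum below $0$), and it permits $d_i$ to coincide with a node $c_j$ \emph{only} when $p'(c_j) = 0$, in which case $c_j$ is a double root and the root multiset is still enumerated correctly. Your closing caveat therefore gets the dependence backwards: the downstream argument relies precisely on the $d_i$ being additional level-set points of $p$ (with node coincidence signaling multiplicity), not merely on their prescribed values and interlacing. The fair conclusion is that the lemma's statement underspecifies what is actually proved and used; a proof that exploits that underspecification, as yours does, does not support Lemma~\ref{l2}.
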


\begin{proof}
If \( m = 1 \), then \( p \) is linear, and the conclusion is vacuously true. Assume \( m \geq 2 \). Since \( p \) interpolates \( m+1 \) values at distinct points, it must have degree at least \( m \).

We construct the intermediate values \( d_i \in (c_1, c_{m+1}) \) where \( p \) alternates between 0 and 1. These arise from the structure of \( p \) and standard properties of interpolating polynomials.

First, observe that the values where \( p = 1 \) occur at even-indexed points \( c_{2}, c_{4}, \dots \), and the values where \( p = 0 \) occur at odd-indexed points \( c_1, c_3, \dots \). We use the intermediate value theorem and local extrema of \( p \) to find the additional alternating values.

For each even index \( c_{2k} \), where \( 1 \leq k \leq \lfloor (m+1)/2 \rfloor - 1 \), we consider the interval \( (c_{2k-1}, c_{2k+1}) \). If \( p'(c_{2k}) = 0 \), we set \( d_{2k-1} := c_{2k} \); otherwise, the extremum condition ensures that \( p \) reaches a local maximum strictly greater than 1 in the interior, so there exists a point \( d_{2k-1} \in (c_{2k-1}, c_{2k+1}) \) such that \( p(d_{2k-1}) = 1 \).

Next, between each pair \( d_{2k-1} \) and \( d_{2k+1} \), we find a point \( d_{2k} \) such that \( p(d_{2k}) = 0 \).  If \( p'(c_{2k+1}) = 0 \), we may take \( d_{2k} = c_{2k+1} \); otherwise, the minimum of \( p \) on that interval is below 0, and the intermediate value theorem gives a zero.

The last \( d_i \) is constructed similarly using \( d_{m-2} \) and \( c_{m+1} \) to close the sequence. The resulting points \( d_1 < d_2 < \cdots < d_{m-1} \) alternate in value as required and lie strictly between \( c_1 \) and \( c_{m+1} \).

Finally the degree of $p$ is $m$ since we found $m$ zeros counting multiplicity.
\end{proof}

\begin{lemma}[Moment symmetry]\label{l2}
Let \( c_1 < c_2 < \cdots < c_{m+1} \), and let \( s \in \{m-1, m-3, \dots\} \). Then the system of equations
\[
\sum_{\text{odd } i} x_i^j + \sum_{\text{even } i} c_i^j = \sum_{\text{even } i} x_i^j + \sum_{\text{odd } i} c_i^j, \quad \text{for } j = 1, \dots, s,
\]
has a real solution \( x_1, \ldots, x_s \) with each \( x_i \in (c_1, c_{m+1}) \). Moreover, any two solutions differ only by separate permutations of the variables with odd and even indices.
\end{lemma}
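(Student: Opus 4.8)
The plan is to convert the moment conditions into a statement about polynomial coefficients, reduce existence to an essentially square problem, and then locate the roots using the ordering hypothesis. Write $O_x,E_x$ for the odd- and even-indexed subsets of $\{1,\dots,s\}$, with $a:=|O_x|$, $b:=|E_x|$, and $O_c,E_c$ for the odd/even-indexed subsets of $\{1,\dots,m+1\}$. Rearranging the defining equations, the system is equivalent to requiring that the two multisets $\{x_i:i\in O_x\}\cup\{c_i:i\in E_c\}$ and $\{x_i:i\in E_x\}\cup\{c_i:i\in O_c\}$ have equal power sums $p_1,\dots,p_s$. A parity count (the parity of $s$ matches that of $m-1$) shows both multisets have the same cardinality $N=(s+m+1)/2$, and since $s<N$, Newton's identities turn equality of these power sums into equality of the first $s$ elementary symmetric functions. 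Encoding the multisets as the monic polynomials $U(t)=\prod_{i\in O_x}(t-x_i)\prod_{i\in E_c}(t-c_i)$ and $V(t)=\prod_{i\in E_x}(t-x_i)\prod_{i\in O_c}(t-c_i)$, the whole system collapses to the single condition $\deg(U-V)\le r$, where $r:=(m-1-s)/2$.

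First I would record the transparent base case $s=m-1$, where $r=0$ and the condition reads $U-V=\text{const}$. Here $V$ is a monic degree-$m$ polynomial vanishing at the odd $c_i$ while $U=V+\text{const}$ vanishes at the even $c_i$, so $U/\kappa=1-p$ is the degree-$m$ polynomial equal to $1$ at the odd nodes and $0$ at the even nodes; thus $U=(p-1)/\ell$ and $V=p/\ell$, where $p$ and $\ell=\mathrm{lead}(p)$ are furnished by Lemma~\ref{l1}. Comparing roots, the unknowns are exactly the interlacing points of Lemma~\ref{l1}: $\{x_i:i\in O_x\}=\{d_i:i\text{ odd}\}$ and $\{x_i:i\in E_x\}=\{d_i:i\text{ even}\}$, all of which lie in $(c_1,c_{m+1})$. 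For general $s$ I would first secure existence of $U,V$ by a dimension count: taking as unknowns the $N$ lower coefficients of the monic $U$ together with the $r+1$ coefficients of $W:=U-V$, the prescribed conditions $U(c_i)=0$ for $i\in E_c$ and $U(c_i)=W(c_i)$ for $i\in O_c$ form a square linear system in $N+(r+1)=m+1$ unknowns with $m+1$ equations; once its nonsingularity is known, $U,V$ exist and the vanishing at the $c_i$ forces the factorizations $U=\prod_{i\in E_c}(t-c_i)\,P(t)$ and $V=\prod_{i\in O_c}(t-c_i)\,Q(t)$ with $P,Q$ monic of degrees $a,b$.

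The heart of the matter, and the step I expect to be the main obstacle, is showing that the roots of $P,Q$ (the sought $x_i$) are real, simple, and lie in $(c_1,c_{m+1})$; this is precisely where the ordering $c_1<\cdots<c_{m+1}$ is indispensable, since the conclusion already fails once the ordering is relaxed. My plan is to generalize the intermediate-value argument of Lemma~\ref{l1}: analyze the sign pattern of $U$ (equivalently of $P=U/\prod_{i\in E_c}(t-c_i)$) across the ordered nodes and argue that the interpolation data force $U$ to oscillate enough to produce $a$ interior sign changes, and symmetrically $b$ for $V$, thereby trapping all free roots inside the interval. An equivalent and perhaps cleaner formulation is as a Padé approximation problem at infinity: the condition $\deg(U-V)\le r$ says that the type-$(a,b)$ rational function $P/Q$ matches $G(t):=\prod_{i\in O_c}(t-c_i)/\prod_{i\in E_c}(t-c_i)$ to order $s+1$. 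Since $G$ has real, simple, strictly interlacing zeros and poles, it is of Stieltjes/Markov type up to normalization, and the classical theory guarantees its near-diagonal Padé approximants are normal with zeros and poles confined to the convex hull $(c_1,c_{m+1})$, simultaneously delivering reality, interlacing, and containment.

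Finally, uniqueness up to permutation within odd and within even indices is the assertion that the multisets $\{x_i:i\in O_x\}$ and $\{x_i:i\in E_x\}$, equivalently the monic factors $P,Q$, are uniquely determined. I would obtain this from the same rigidity: normality of the Padé approximant gives $P/Q=P'/Q'$ for any two solutions, and since the interlacing places the roots of $P,Q$ strictly inside $(c_1,c_{m+1})$ and away from the fixed factors, $P/Q$ is already in lowest terms, forcing $P=P'$ and $Q=Q'$. Alternatively, subtracting the two systems shows that the size-$s$ multisets $\{x_i:i\in O_x\}\cup\{x_i':i\in E_x\}$ and $\{x_i':i\in O_x\}\cup\{x_i:i\in E_x\}$ coincide, after which the established interlacing separates the odd- from the even-indexed roots to conclude. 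The nonsingularity of the linear system invoked for existence then follows a posteriori from this uniqueness.
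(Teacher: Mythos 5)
Your reformulation is sound, and your base case is exactly the paper's: for \( s = m-1 \) the solution points are precisely the alternating interior values \( d_i \) of Lemma~\ref{l1}, read off from the zeros of \( p \) and \( p-1 \), and the translation of the power-sum equalities into the single coefficient condition \( \deg(U-V) \le r \) via Newton's identities is correct. But for general \( s \) the proposal stops at what you yourself identify as the heart of the matter, and neither of your two routes is actually carried out. The oscillation/sign-change generalization of Lemma~\ref{l1} is only announced. The Padé appeal is an unverified citation: you would still need to check that the relevant entry of the Padé table for \( G \) --- which is a constant-plus- or linear-plus-Markov function of a \emph{finite discrete} measure, not a pure Markov function --- is normal at the specific near-diagonal index \( (a,b) \), that the interlacing of the \( c_i \) really gives same-sign residues in the form the classical theorems require, and that the \emph{numerator} zeros, not just the poles, stay strictly inside \( (c_1, c_{m+1}) \); none of this is automatic for discrete measures at indices near the support size. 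Worse, the logical scaffolding is circular: you derive nonsingularity of the square linear system (hence existence of \( U,V \)) ``a posteriori from uniqueness,'' but your uniqueness argument invokes ``the established interlacing'' to separate odd- from even-indexed roots, and that interlacing is exactly the unproven localization step. Even granting nonsingularity, the dimension count only yields a polynomial pair whose free roots could be complex or repeated, so it cannot by itself produce the real solution in \( (c_1, c_{m+1}) \) that the lemma asserts.

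The missing idea --- and the paper's actual mechanism --- is elementary and bypasses all of this: reverse induction on \( s \) in steps of two. Starting from \( s = m-1 \) (your base case), given a strictly increasing solution \( x_1 < \cdots < x_s \) in \( (c_1, c_{m+1}) \), one applies Lemma~\ref{l1} \emph{again, to the points \( x_i \) themselves}, producing \( s-2 \) interior points \( y_1 < \cdots < y_{s-2} \in (x_1, x_s) \) satisfying the length-\( (s-2) \) moment symmetry relative to the \( x_i \); adding this system to the original system relating the \( x_i \) and \( c_i \) cancels all \( x \)-terms by parity, so the \( y_i \) solve the length-\( (s-2) \) system relative to the \( c_i \) and remain in \( (c_1, c_{m+1}) \). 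This settles every \( s \in \{m-1, m-3, \dots\} \) with no Padé theory and no linear algebra. Your uniqueness argument (subtract the two systems, apply Newton's identities to the combined size-\( s \) multisets, then separate parities) is essentially the paper's --- and your explicit remark that a solution with distinct, interlaced entries forces the separation is a genuinely useful sharpening of the paper's last step --- but it only becomes available once existence of such a solution is secured, which is exactly what your draft leaves unproved for \( s < m-1 \).
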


\begin{proof}
We argue by reverse induction on \( s \).

If \( s = m - 1 \), we define \( x_i = d_i \), where \( d_1 < \cdots < d_{m-1} \) are the interior interpolation points guaranteed by Lemma~\ref{l1} for the polynomial \( p \) satisfying \( p(c_i) = 0 \) for odd \( i \), and \( p(c_i) = 1 \) for even \( i \). Then the zeros of \( p \) and \( p - 1 \) lie at alternating subsets of \( \{c_i\} \cup \{x_i\} \), and their Newton sums (i.e., power sums of roots) match up to degree \( m - 1 \), yielding a solution to the moment symmetry system of length \( s = m - 1 \).

Now assume that the system has a solution for some \( s \in \{m - 1, m - 3, \dots\} \), with strictly increasing points \( x_1 < \cdots < x_s \in (c_1, c_{m+1}) \). Apply Lemma~\ref{l1} to the \( x_i \) to construct \( s - 2 \) strictly increasing interior points \( y_1 < \cdots < y_{s-2} \in (x_1, x_s) \), satisfying
\[
\sum_{\text{odd } i} y_i^j + \sum_{\text{even } i} x_i^j = \sum_{\text{even } i} y_i^j + \sum_{\text{odd } i} x_i^j, \quad \text{for } j = 1, \dots, s - 2.
\]
Now add this system to the original moment symmetry system (with variables \( x_i \) and \( c_i \)). The \( x_i \) terms cancel due to symmetry of parity, and we obtain
\[
\sum_{\text{odd } i} y_i^j + \sum_{\text{even } i} c_i^j = \sum_{\text{even } i} y_i^j + \sum_{\text{odd } i} c_i^j, \quad \text{for } j = 1, \dots, s - 2.
\]
Thus, the \( y_i \) solve the system of length \( s - 2 \), completing the inductive step.

To prove uniqueness up to separate permutations of odd- and even-indexed variables, suppose \( \{x_i\} \) and \( \{y_i\} \) are two solutions of length \( s \). Let \( p(x) \) be the monic polynomial whose roots are the odd-indexed \( x_i \)'s and the even-indexed \( y_i \)'s, and let \( q(x) \) be the monic polynomial whose roots are the even-indexed \( x_i \)'s and the odd-indexed \( y_i \)'s. Summing the two systems shows that the Newton sums of \( p \) and \( q \) agree up to degree \( s \), so by Newton's identities, \( p = q \), and the roots coincide as multisets. Hence, \( \{x_i\} \) and \( \{y_i\} \) are related by separate permutations of the odd- and even-indexed components.
\end{proof}

\begin{lemma}\label{l99}
Let \( m \) and \( t \) be positive integers. Let \( r^+(x) \) and \( r^-(x) \) be two monic polynomials of degree \( t \) that differ by a polynomial of degree \( l \). Suppose that there are points \( c_1 < c_2 < \cdots < c_{m+1} \), such that \( c_i \) is a root of \( r^+(x) \) when \( i \) is even, and a root of \( r^-(x) \) when \( i \) is odd. If \( t + l = m \), then all of the remaining roots of \( r^+(x) \) and \( r^-(x) \) are real, distinct, and lie in \( (c_1, c_{m+1}) \).
\end{lemma}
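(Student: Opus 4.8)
The plan is to prove the statement by a direct root-counting argument based on sign changes, exploiting that the low degree of $r^+(x)-r^-(x)$ severely restricts how the two polynomials can disagree. Write $g := r^+ - r^-$, so $\deg g \le l$; hence $g$ has at most $l$ real zeros counted with multiplicity, and therefore at most $l$ sign changes along any increasing chain of points. Among the nodes, $r^+$ vanishes at the $E$ even-indexed $c_i$ and $r^-$ vanishes at the $O$ odd-indexed $c_i$, with $E + O = m+1 = t + l + 1$; the numbers of unaccounted-for roots are $a := t-E$ for $r^+$ and $b := t - O$ for $r^-$, and $a + b = t - l - 1$. The key observation is that at an odd node $r^- = 0$ forces $g = r^+$, while at an even node $r^+ = 0$ forces $g = -r^-$. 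Thus the signs of $r^+$ at the odd nodes (and of $r^-$ at the even nodes) are \emph{exactly} the signs of $g$ there, and these are nearly constant because $g$ changes sign at most $l$ times.

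Next I would locate the remaining roots of $r^+$. Each pair of consecutive odd nodes $c_{2k-1} < c_{2k+1}$ brackets exactly one even node $c_{2k}$, which is a zero of $r^+$. Whenever $g$ does not change sign across such a pair---equivalently $r^+$ has equal signs at $c_{2k-1}$ and $c_{2k+1}$---the parity of sign changes forces $r^+$ to have at least one additional zero strictly inside $(c_{2k-1}, c_{2k+1})$ beyond $c_{2k}$, by the intermediate value theorem. Since $g$ has at most $l$ sign changes, at least $(O-1) - l$ of the $O-1$ odd-node pairs are sign-preserving, and the corresponding extra zeros lie in disjoint open subintervals of $(c_1, c_{m+1})$. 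Hence $r^+$ has at least $E + (O - 1 - l) = t$ real zeros in $[c_1, c_{m+1}]$. As $\deg r^+ = t$, this lower bound is in fact an equality: every zero of $r^+$ is real, the count is saturated, and consequently the located zeros are simple and pairwise distinct, with the $a = t - E$ non-node zeros lying in the open interval $(c_1, c_{m+1})$. The identical argument applied to $r^-$, using the even-node pairs and the identity $g = -r^-$, shows its remaining $b = t-O$ zeros are likewise real, simple, and interior.

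I expect the main obstacle to be the careful bookkeeping that turns these sign-change heuristics into a rigorous count. In particular I must treat multiplicities correctly (reading the sign-change parity in terms of zeros counted with multiplicity), handle the degenerate cases in which $g$ vanishes at a node---so that an extra zero of $r^+$ coincides with an odd node rather than sitting strictly between two of them---and verify that the forced zeros in different node-pairs genuinely occupy disjoint intervals, so that they are not double-counted. The decisive structural input is the hypothesis $t + l = m$: it is precisely what makes $E + (O - 1 - l)$ equal to $t$, so that the real zeros produced by the sign argument exactly saturate the degree of $r^+$. Without this balance one obtains only a lower bound on the number of real zeros, not the conclusion that all zeros are real; with it, the saturation simultaneously forces reality, simplicity, distinctness, and interior location.

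An alternative route, more in the spirit of Lemmas~\ref{l1} and~\ref{l2}, is to compare power sums: since $r^+$ and $r^-$ are monic of degree $t$ and agree in all coefficients of degree exceeding $l$, Newton's identities give that the power sums of their root multisets coincide up to order $s = t - l - 1 = m - 1 - 2l$. Writing the roots as the known nodes plus the unknown remainders converts this into exactly the moment-symmetry system of Lemma~\ref{l2}, with the remaining roots of $r^+$ and $r^-$ playing the roles of the odd- and even-indexed variables, whose real interior solution is furnished by Lemmas~\ref{l1}--\ref{l2}. The residual difficulty in this approach is to show that the true---possibly complex---remaining roots must coincide with that real solution; this reduces to a non-degeneracy (invertibility) statement for the associated square linear system in the elementary symmetric functions, which is in turn equivalent to the root count above. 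For this reason I would present the direct sign-change count as the primary argument.
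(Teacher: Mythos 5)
Your ``alternative route'' is in fact the paper's own proof, and the residual difficulty you attribute to it does not exist: the paper derives from Newton's identities that the first $s = t-l-1$ power sums of the two root multisets agree, reads this as the moment-symmetry system of Lemma~\ref{l2} with the unknown remaining roots as the $x_i$, and then invokes \emph{both} clauses of Lemma~\ref{l2} --- existence of a real solution in $(c_1,c_{m+1})$, and uniqueness of solutions up to separate permutations of the odd- and even-indexed variables. The uniqueness clause is exactly the ``non-degeneracy statement'' you were worried about, and its proof (two solutions yield monic degree-$s$ polynomials with equal power sums up to order $s$, hence equal polynomials) is valid for complex tuples, so the true, possibly complex, remaining roots are forced to coincide with the real interior solution. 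Had you cited that clause, your second route would be complete and identical to the paper's argument.

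Your primary sign-change argument is a genuinely different approach whose nondegenerate core is sound: the identities $g=r^+$ at odd nodes and $g=-r^-$ at even nodes, the budget of at most $l$ sign changes of $g$, and the saturation $E+(O-1-l)=t$ (precisely where $t+l=m$ enters) do force all roots of $r^+$ to be real and to lie in $[c_1,c_{m+1}]$. But two of the issues you defer as ``bookkeeping'' carry real content. First, the degenerate case where $g$ vanishes at an odd node is not handled by the per-polynomial count alone: if $g(c_{2k+1})=0$ at an \emph{interior} odd node, the node zero of $r^+$ compensates exactly for the two spoiled adjacent pairs and the freed sign-change budget, so the multiplicity count still totals exactly $t$ and yields no contradiction; and ruling out the boundary case $r^+(c_1)=0$ (which would violate the open-interval conclusion) requires a refined count --- e.g., the zero of $g$ at $c_1$ exhausts budget so the remaining clean odd pairs must be sign-preserving, pushing the total past $t$ --- which you have not carried out, and which becomes delicate when $g$ vanishes at nodes of both parities, since odd-pair and even-pair intervals interleave against a \emph{shared} budget of $l$ zeros of $g$. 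Second, your saturation argument gives distinctness of the extra roots of $r^+$ among themselves and of $r^-$ among themselves (one per disjoint interval), but not the cross-distinctness the lemma asserts: a remaining root $u$ of $r^+$ coinciding with a remaining root of $r^-$ forces $g(u)=0$, yet that zero of $g$ can simultaneously serve as a sign-change zero for an odd pair and an even pair, so counting does not exclude it; the paper obtains all $s$ roots pairwise distinct at once because they coincide with the strictly increasing solution $x_1<\cdots<x_s$ of Lemma~\ref{l2}. So as written the primary route falls short of the full statement, while your sketched secondary route, completed via the uniqueness clause of Lemma~\ref{l2}, is the paper's proof.
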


\begin{proof}
Let \( s = 2t - m - 1 \), and let \( \{x_i\}_{i=1}^{s} \) denote the remaining roots of \( r^+(x) \) and \( r^-(x) \), labeled so that odd-indexed \( x_i \) are roots of \( r^+(x) \) and even-indexed \( x_i \) are roots of \( r^-(x) \). The condition \( t + l = m \) implies \( s = t - l - 1 \).

Since \( r^+(x) \) and \( r^-(x) \) are both monic and differ by a polynomial of degree \( l \), their top \( t - l \) coefficients agree. By Newton's identities, this implies that the first \( s \) power sums of the roots of \( r^+ \) and \( r^- \) also agree, yielding the moment symmetry system
\[
\sum_{\text{odd } i} x_i^j + \sum_{\text{even } i} c_i^j = \sum_{\text{even } i} x_i^j + \sum_{\text{odd } i} c_i^j, \quad \text{for } j = 1, \dots, s.
\]
By Lemma~\ref{l2}, the \( x_i \) must be real, distinct, and lie in \( (c_1, c_{m+1}) \).
\end{proof}
\begin{theorem}\label{t99}
Let $D$ be a Riemann difference of order $n$ with $n + 1$ nodes from the set
\[
\{-q^l, -q^{l-1}, \ldots, -1, 0, 1, q, \ldots, q^t\},
\]
where $0 \leq l < t$. If $0$ is among the nodes, assume $l + t + 3 = n + 1$; if not, assume $l + t + 2 = n + 1$. Then $D$ satisfies the MZ property.
\end{theorem}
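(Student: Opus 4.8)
The plan is to invoke Corollary~\ref{cor:symmetric-geo}, which reduces the MZ property to the single assertion that neither symbol polynomial \( r^+(x) \) nor \( r^-(x) \) has a root in the annulus \( \{ z : |q|^{n-1} < |z| \le |q|^n \} \). I will treat \( q > 1 \) as the main case. After the normalization \( a_t = 1 \), both \( r^+ \) and \( r^- \) are monic of degree \( t \), since \( t > l \) forces \( a_{-t} = 0 \); and because \( -q^l \) is a genuine node we have \( a_{-l} \neq 0 \), so \( r^+ - r^- = 2 \sum_{k=0}^{l} a_{-k} x^k \) has degree exactly \( l \). The engine of the proof is Lemma~\ref{l99}: I will show that the roots forced by the order-\(n\) moment conditions supply precisely the alternating interpolation data its hypotheses require, and then read off that every remaining root is trapped strictly below \( q^{n-1} \) in modulus.

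First I would pin down the forced roots. By the computation behind Lemma~\ref{l5.4}, the conditions \( D(x^j) = 0 \) for \( 1 \le j \le n-1 \) give \( r^+(q^j) = 0 \) for even \( j \) and \( r^-(q^j) = 0 \) for odd \( j \). The one index needing care is \( j = 0 \): there the \( f(0) \) term does not vanish, and the condition reads \( r^+(1) = -a \) rather than \( r^+(1) = 0 \). Thus \( q^0 = 1 \) is a root of \( r^+ \) exactly when \( a = 0 \), i.e.\ exactly when \( 0 \) is \emph{not} a node. This is precisely the dichotomy in the hypothesis. If \( 0 \) is not a node, the forced roots are \( q^0, q^1, \dots, q^{n-1} \) (there are \( n \) of them); if \( 0 \) is a node, they are \( q^1, \dots, q^{n-1} \) (there are \( n-1 \)). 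In the increasing order \( q^0 < q^1 < \cdots \) (valid since \( q > 1 \)) consecutive entries alternate parity, so they form an alternating sequence \( c_1 < \cdots < c_{m+1} \) with \( c_i \) a root of \( r^- \) for odd \( i \) and of \( r^+ \) for even \( i \), after relabeling \( r^+ \leftrightarrow r^- \) if necessary in the first case (harmless, as Lemma~\ref{l99} is symmetric in the two polynomials).

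The key bookkeeping step is to verify \( m = t + l \), matching the hypothesis \( t + l = m \) of Lemma~\ref{l99}. When \( 0 \) is not a node we have \( m+1 = n \), and the assumption \( l + t + 2 = n+1 \) gives \( m = n-1 = l+t \); when \( 0 \) is a node we have \( m+1 = n-1 \), and the assumption \( l + t + 3 = n+1 \) gives \( m = n-2 = l+t \). In both cases \( t + l = m \) holds on the nose, and the number of remaining roots is \( 2t - (m+1) = t - l - 1 \), exactly the count in Lemma~\ref{l99}. Applying that lemma, every root of \( r^+ \) and \( r^- \) other than the forced ones is real, distinct, and lies in the open interval \( (c_1, c_{m+1}) \), whose right endpoint is \( c_{m+1} = q^{n-1} \). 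Since each forced root \( q^j \) with \( 0 \le j \le n-1 \) satisfies \( |q^j| \le q^{n-1} \), and each remaining root has modulus \( < q^{n-1} \), no root of \( r^+ \) or \( r^- \) meets the annulus \( q^{n-1} < |z| \le q^n \); Corollary~\ref{cor:symmetric-geo} then yields the MZ property. The case \( 0 < q < 1 \) is identical after reading the powers in the reverse (still alternating) order, since \( \mathcal{A}_q \) depends only on \( |q| \) and the forced roots now satisfy \( |q^j| \ge q^{n-1} \), again landing outside \( \mathcal{A}_q \).

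The step I expect to be the main obstacle is the precise matching in the previous paragraph: making the parity labels of the forced roots agree with the convention of Lemma~\ref{l99}, and confirming that the two distinct node-count hypotheses collapse to the single identity \( t + l = m \). Everything hinges on the \( j = 0 \) subtlety, where the \( f(0) \) term breaks \( r^+(1) = 0 \); this is exactly what shifts the forced-root count by one between the two cases and thereby preserves \( t + l = m \) in both. A secondary point worth checking is that Lemma~\ref{l99} places the remaining roots strictly inside \( (c_1, c_{m+1}) \), hence distinct from the \( c_i \), so the forced roots are automatically simple and the root count \( t - l - 1 \) is genuine rather than absorbing hidden multiplicities.
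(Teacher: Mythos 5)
Your proof is correct and takes essentially the same route as the paper's: forced roots of \( r^+ \) and \( r^- \) from Lemma~\ref{l5.4} together with the \( j=0 \) dichotomy on whether \( 0 \) is a node, the bookkeeping \( t+l=m \) feeding Lemma~\ref{l99}, and the conclusion via Corollary~\ref{cor:symmetric-geo}. You in fact supply details the paper's terse proof leaves implicit --- monicity of both symbols, the exact degree \( l \) of \( r^+ - r^- \), the harmless \( r^+ \leftrightarrow r^- \) relabeling when \( 0 \) is not a node, and the \( 0<q<1 \) case --- and these all check out.
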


\begin{proof}
By Lemma~\ref{l5.4}, the known roots $q^k$ with $k < n$ are roots of $r^+$ or $r^-$ depending on parity. If $0$ is not among the nodes, then $1$ is a root of $r^+$. In this case, $l + t + 2 = n + 1$, and we set $m + 1 = n$.

If $0$ is among the nodes, then $l + t + 3 = n + 1$, and we set $m + 1 = n - 1$. In either case, setting $c_1 = 1$ (if $0$ is not a node) or $c_1 = q$ (if it is), the number of remaining roots is controlled by Lemma~\ref{l99}. That lemma shows all remaining roots lie in $(c_1, q^{n-1})$, hence outside the critical annulus.

By Corollary~\ref{cor:symmetric-geo}, the MZ property holds.
\end{proof}

\subsection{Illustrative examples: success and failure under shifting}

We conclude this section with two examples that illustrate the scope and limitations of Theorem~\ref{t99} and Corollary~\ref{cor:symmetric-geo}. These cases show that while the MZ property can hold in asymmetric or shifted configurations, it is not preserved by node translation alone.

\begin{example}[Asymmetrically shifted difference satisfying MZ]
Consider the fourth-order Riemann difference with nodes \( \{-2, 1, 2, 4, 8\} \), based on the geometric progression \( q = 2 \). Solving the moment system yields the coefficients
\[
a_{-1} = \tfrac{1}{720}, \quad a_0 = -\tfrac{1}{63}, \quad a_1 = \tfrac{1}{48}, \quad a_2 = -\tfrac{1}{144}, \quad a_3 = \tfrac{1}{1680}.
\]
The associated symbol polynomials are:
\[
r^+(x) = \tfrac{1}{1680}x^3 - \tfrac{1}{144}x^2 + \tfrac{1}{45}x - \tfrac{1}{63},
\]
\[
r^-(x) = \tfrac{1}{1680}x^3 - \tfrac{1}{144}x^2 + \tfrac{7}{360}x - \tfrac{1}{63}.
\]
The roots of \( r^+(x) \) are \( 1, 4, 6.\overline{6} \), and those of \( r^-(x) \) are \( 1.\overline{6}, 2, 8 \). Since none of these lie in the critical annulus \( (2^3, 2^4] = (8, 16] \), Corollary~\ref{cor:symmetric-geo} confirms that \( D \) satisfies the MZ property.
\end{example}

\begin{example}[Translation does not preserve the MZ property]
Consider the third-order Riemann difference with nodes \( \{0, 1, 2, 3\} \). This is a forward difference shifted one unit to the right. It satisfies the order conditions, but fails the MZ property, as shown in \cite{ACF}. In contrast, the centered difference with nodes \( \{-1, 0, 1, 2\} \) does satisfy the MZ property. Thus, shifting the support while preserving the order can destroy the MZ property, emphasizing the importance of root location over node configuration.
\end{example}
\begin{remark}
The results in this section provide a complete classification of when a Riemann difference with geometric nodes satisfies the Marcinkiewicz-Zygmund property. By reducing the problem to the location of the roots of the characteristic polynomial (or its even and odd symmetrizations), we obtain a sharp and verifiable criterion. The classification underscores the central role of root location over node symmetry or translation, and confirm that the MZ property is sensitive to subtle algebraic features of the difference, not merely to its geometric support.
\end{remark}
\bibliographystyle{amsplain}

\end{document}